\definecolor{codegreen}{rgb}{0,0.6,0}
\definecolor{codegray}{rgb}{0.5,0.5,0.5}
\definecolor{codepurple}{rgb}{0.58,0,0.82}
\definecolor{backcolour}{rgb}{0.96,0.96,0.925}
\lstdefinestyle{mystyle}{language=Mathematica,morekeywords={Positive,Min,Permute,ArrayReshape,Tally,GroupElements,GroupOrder,SymmetricGroup,DeleteDuplicates,FirstPosition},
    backgroundcolor=\color{backcolour},   
    commentstyle=\color{codegreen},
    keywordstyle=\color{magenta},
    numberstyle=\tiny\color{codegray},
    stringstyle=\color{codepurple},
    basicstyle=\ttfamily\footnotesize,
    breakatwhitespace=false,         
    breaklines=true,                 
    captionpos=b,                    
    keepspaces=true,                 
    numbers=left,                    
    numbersep=5pt,                  
    showspaces=false,                
    showstringspaces=false,
    showtabs=false,                  
    tabsize=2
}
\newcolumntype{C}{>{$}c<{$}}
\newenvironment{mythm}[1]
  {\innercustomthm}
  {\endinnercustomthm}
\numberwithin{equation}{section}
\newtheorem{theorem}{Theorem}[section]
\newtheorem{proposition}[theorem]{Proposition}
\newtheorem{lemma}[theorem]{Lemma}
\newtheorem{remark}[theorem]{Remark}
\newtheorem{example}[theorem]{Example}
\newtheorem{conjecture}[theorem]{Conjecture}
\newtheorem{question}[theorem]{Question}
\DeclarePairedDelimiterX{\bracket}[3]{#1}{#2}{#3}
\newcommand{\round}[1]{\bracket*{(}{)}{#1}}
\providecommand{\newoperator}[3]{\newcommand*{#1}{\mathop{#2}#3}}
\providecommand{\renewoperator}[3]{\renewcommand*{#1}{\mathop{#2}#3}}
\renewoperator{\Re}{\mathrm{Re}}{\nolimits}
\renewoperator{\Im}{\mathrm{Im}}{\nolimits}
\DeclarePairedDelimiterXPP{\nrm}[2]{}{\lVert}{\rVert}{\ensuremath{_{#1}}}{\ifblank{#2}{\:\cdot\:}{#2}}
\newcommand{\norm}[2]{\nrm*{#1}{#2}}
\DeclarePairedDelimiterXPP\prob[1]{\mathbb{P}}{\lbrace}{\rbrace}{}{#1} 
\DeclarePairedDelimiterXPP\probability[2]{\mathbb{P}_{#1}}{\lbrace}{\rbrace}{}{#2} 
\DeclarePairedDelimiterXPP\expectation[1]{\mathbb{E}}{\lbrack}{\rbrack}{}{#1} 
\DeclarePairedDelimiterXPP\expectationdist[2]{\mathbb{E}_{#1}}{\lbrack}{\rbrack}{}{#2} 
\DeclarePairedDelimiterXPP\variance[1]{\mathrm{Var}}{\lbrack}{\rbrack}{}{#1} 
\DeclarePairedDelimiterXPP\variancedist[2]{\mathrm{Var}_{#1}}{\lbrack}{\rbrack}{}{#2} 
\DeclarePairedDelimiterXPP\covariance[2]{\mathrm{Cov}}{(}{)}{}{#1,\mathopen{}#2} 
\newoperator{\supp}{\mathrm{supp}}{\nolimits}
\providecommand*{\diff}
{\@ifnextchar^{\DIfF}{\DIfF^{}}}
\def\DIfF^#1{
	\mathop{\mathrm{\mathstrut d}}
	\nolimits^{#1}\gobblespace}
\def\gobblespace{
	\futurelet\diffarg\opspace}
\def\opspace{
	\let\DiffSpace\!
	\ifx\diffarg(
	\let\DiffSpace\relax
	\else
	\ifx\diffarg[
	\let\DiffSpace\relax
	\else
	\ifx\diffarg\{
	\let\DiffSpace\relax
	\fi\fi\fi\DiffSpace}
\providecommand*{\pdiff}
{\@ifnextchar^{\pDIfF}{\pDIfF^{}}}
\def\pDIfF^#1{
	\mathop{\mathrm{\mathstrut \partial}}
	\nolimits^{#1}\gobblespace}
\def\gobblespace{
	\futurelet\diffarg\opspace}
\def\opspace{
	\let\DiffSpace\!
	\ifx\diffarg(
	\let\DiffSpace\relax
	\else
	\ifx\diffarg[
	\let\DiffSpace\relax
	\else
	\ifx\diffarg\{
	\let\DiffSpace\relax
	\fi\fi\fi\DiffSpace}
\DeclarePairedDelimiterX\Set[1]\{\}{
	
	#1
}
\newcommand{\R}{\mathbb{R}}
\newcommand{\N}{\mathbb{N}}
\newcommand{\Dcal}{\mathcal{D}}
\newcommand{\Ecal}{\mathcal{E}}
\newcommand{\Fcal}{\mathcal{F}}
\newcommand{\Gcal}{\mathcal{G}}
\newcommand{\Pcal}{\mathcal{P}}
\newcommand{\Tcal}{\mathcal{T}}
\newcommand{\Wcal}{\mathcal{W}}
\newcommand{\eq}{\begin{equation}}
\newcommand{\en}{\end{equation}}
\title[Marcus--Ree inequality and Erd\H{o}s matrices]{A note on Erd\H{o}s matrices and Marcus--Ree inequality}
\author[Kushwaha]{Aman Kushwaha}
\address{Aman Kushwaha \\ Department of Mathematics \\ University of Delhi\\ Delhi, India \\ {Email: aman\_7778@maths.du.ac.in }} 
\author[Tripathi]{Raghavendra Tripathi}
\address{ Raghavendra Tripathi\\ Department of Mathematics \\ Division of Science\\ New York University\\ Abu Dhabi, UAE\\ {Email: rt1986@nyu.edu}}
\keywords{bistochastic, Marcus--Ree inequality, Erd\H{o}s matrices, Kernels, maximal trace, diagonal sum, assignment problem}
\subjclass[2020]{ (Primary) 15B99, 15B51, 15A45, (Secondary) 15A80  }
\thanks{We thank Ludovick Bouthat for carefully reading the first draft of the manuscript and for bringing the reference~\cite{christensen2019comparative} to our attention.}
\date{\today}
\begin{document}

\begin{abstract}
In 1959, Marcus and Ree proved that any bistochastic matrix $A$ satisfies
\[\Delta_n(A)\coloneqq \max_{\sigma\in S_n}\sum_{i=1}^{n}A(i, \sigma(i))-\sum_{i, j=1}^n A(i, j)^2 \geq 0\;.\]
Erd\H{o}s asked to characterize the bistochastic matrices satisfying $\Delta_n(A)=0$. This problem remains largely open, and very recently, a complete list of such matrices was obtained in dimension $n=3$ by Bouthat, Mashreghi, and Morneau-Gu\'erin. Soon after, Tripathi proved that there were only finitely many such matrices in any dimension $n$. In this paper, we continue the investigation initiated in these two works. We characterize all $4\times 4$ bistochastic matrices satisfying $\Delta_4(A)=0$. Furthermore, we show that for $n\geq 3$, $\Delta_n(A)=\alpha$ has uncountably many solutions when $\alpha\in (0, (n-1)/4)$. This answers a question raised in [Tripathi, R., \emph{Some observations on Erd\H{o}s matrices}, Linear Algebra and Its Applications 708 (2025)]. We also extend the Marcus--Ree inequality to infinite bistochastic arrays and bistochastic kernels. Our investigation into $4\times 4$ Erd\H{o}s matrices also leads to several intriguing questions of independent interest. We propose several questions and conjectures and present numerical evidence for them.  
\end{abstract}

\maketitle
\section{Introduction}~\label{sec:Intro}
An $n\times n$ matrix $A$ is said to be \emph{bistochastic} if it has nonnegative entries and the entries in each row and each column of $A$ sum to $1$, that is, $A(i, j)\geq 0$ for all $i, j\in [n]:=\{1, \ldots, n\}$ and
\[ \sum_{j=1}^{n}A(i, j) = \sum_{j=1}^{n}A(j, i) =1, \quad \forall i\in [n]\;.\]
Let $\Omega_n$ be the set of all $n\times n$ bistochastic matrices, also called the Birkhoff polytope. Bistochastic matrices have been intensely investigated for a long time because of their ubiquity. It is easy to see that $\Omega_n$ is a compact, convex set. The famous Birkhoff--von Neumann theorem characterizes the set of extreme points of $\Omega_n$, which is precisely the set of all $n\times n$ permutation matrices, $\mathfrak{S}_n$. It is easy to check that the permanent of a bistochastic matrix can be at most one. In 1926, van der Waerden conjectured that the permanent of an $n\times n$ bistochastic matrix $A$ is at least $n!/n^n$, and it is achieved by the unique bistochastic matrix $J_n\in \Omega_n$ all whose entries are $1/n$. This conjecture was finally proved in 1981 by Egorychev and Falikman. We refer the reader to~\cite{gurvits2007van} for a more recent and elegant proof that highlights the deep connections of this problem with real stable polynomials. 

In 1959, Marcus and Ree~\cite{marcus1959diagonals}, as a first step towards the van der Waerden conjecture, proved that for any $A\in \Omega_n$, we have
\begin{equation}
\label{eqn:MarcusRee}
A\mapsto \Delta_{n}(A) \coloneqq \max_{\sigma\in S_n} \sum_{i=1}^{n}A(i, \sigma(i))-\sum_{i, j=1}^{n}A(i, j)^2\geq 0\;,
\end{equation}
where $S_n$ is the set of all permutations of the set $[n]=\{1, \ldots, n\}$. We refer the reader to the original paper of Marcus and Ree~\cite{marcus1959diagonals} for the connection of this inequality with van der Waerden's conjecture. In the inequality above, the second term is readily recognized as the square of the Frobenius norm of $A$, namely, $\|A\|_{\operatorname{F}}^2 = \sum_{i, j=1}^{n}A(i, j)^2$. The first term is often referred to as the maximal trace, which we shall denote by maxtrace from here on:
\[\mathrm{maxtrace}(A)\coloneqq \max_{\sigma\in S_n} \sum_{i=1}^{n}A(i, \sigma(i))\;.\] We should remark that the $\mathrm{maxtrace}(A)$ can be interpreted as a `tropicalization' of the permanent. More precisely, consider the $\max$-algebra $(\mathbb{R}, \oplus, \cdot)$ where 
\[ x\cdot y = x+y, \qquad \text{and} \quad x\oplus y = \max\{x, y\}\;.\]
The permanent of a matrix $A$ in this algebra will be 
\[ \dot{\operatorname{perm}}(A) := \max_{\sigma\in S_n}\sum_{i=1}^{n}A(i, \sigma(i)) =\mathrm{maxtrace}(A)\;. \]
We also refer the reader to~\cites{bapat1995permanents,butkovivc2003max,burkard2003max} for this interpretation of maximal trace as the permanent in max-algebra and related results. Maximal trace has also been extensively studied in the context of assignment problems. 

Seeing the inequality~\eqref{eqn:MarcusRee}, Erd\H{o}s asked to characterize the matrices $A\in \Omega_n$ for which $\Delta_n(A)=0$. Following~\cite{tripathi2025some}, we refer to such a matrix as an Erd\H{o}s matrix. Notice that both $\operatorname{maxtrace}(\cdot)$ and the Frobenius norm $\norm{F}{\cdot}$ are invariant under pre/post-multiplication by permutation matrices. It follows that $\Delta_n(A)=\Delta_n(PAQ)$ for any permutation matrices $P, Q$. In particular, $A$ is an Erd\H{o}s matrix if and only if $PAQ$ is an Erd\H{o}s matrix for every permutation matrix $P, Q$. We say that bistochastic matrices $A$ and $B$ are \emph{equivalent}, $A\sim B$, if $B=PAQ$ for some permutation matrices $P, Q$. In this light, it is natural to characterize Erd\H{o}s matrices in $\Omega_n$ only up to the equivalence. An easy computation shows that 
\begin{align*}
    I_2 = \begin{pmatrix}
        1 & 0 \\ 0 & 1
    \end{pmatrix},\;\;  J_2 =\frac{1}{2}\begin{pmatrix}
        1 & 1\\ 1 & 1
    \end{pmatrix},\;\; \text{and}\quad P_2=\begin{pmatrix}
         0 & 1\\ 1 & 0
    \end{pmatrix}
\end{align*}
are the only Erd\H{o}s matrices in $\Omega_{2}$. It is easily verified that there are precisely $2$ Erd\H{o}s matrices (up to equivalence), namely, the identity $I$
and the matrix $J_2$. While Marcus and Ree proved some results about Erd\H{o}s matrices, a complete understanding remains elusive. Even for dimension $n=3$, a complete list of Erd\H{o}s matrices (up to the equivalence) was obtained only recently in 2024~\cite{bouthat2024question}. We give the complete list of $3\times 3$ Erd\H{o}s matrices below:
\begin{align*}
I_3 &= \;\;\;\begin{pmatrix}
1 & 0 & 0 \\ 0 & 1& 0\\ 0 & 0 & 1
\end{pmatrix}, & J_3 &= \frac{1}{3}\begin{pmatrix}
1 & 1 & 1\\ 1 & 1 & 1\\ 1& 1 &1
\end{pmatrix}, & I\oplus J_2 &=\frac{1}{2} \begin{pmatrix}
2 & 0 & 0 \\ 0 & 1 & 1\\ 0 & 1 & 1
\end{pmatrix},\\
S &= \frac{1}{4}\begin{pmatrix}
   0 &2 & 2\\
   2 & 1&1\\
   2&1& 1
\end{pmatrix}, & T&= \frac{1}{2}\begin{pmatrix}
    0 & 1 & 1\\
   1 & 0 & 1\\
    1 &1 & 0
\end{pmatrix}, & R &= \frac{1}{5}\begin{pmatrix}
    3 & 0 & 2\\
    0 & 3 & 2\\
    2 & 2 & 1
\end{pmatrix}\;.
\end{align*}
Notice that the matrix $I\oplus J_2$ and $T=\frac{1}{2}(3J_3-I_3)$ naturally generalize to higher dimensions, that is, $\frac{1}{n-1}(nJ_n-I_n)\in \Omega_n$ is an Erd\H{o}s matrix. And, if $A\in \Omega_n, B\in \Omega_m$ are Erd\H{o}s matrices then $A\oplus B\in \Omega_{m+n}$ is an Erd\H{o}s matrix. An important step in the direction of characterizing Erd\H{o}s matrices was taken in~\cite{tripathi2025some}*{Proposition 1.2} where it was shown that every Erd\H{o}s matrix can be written as a convex combination of linearly independent permutation matrices and the convex hull of any collection of linearly independent permutation matrices contains at most one Erd\H{o}s matrix. In particular, \cite{tripathi2025some}*{Theorem 1.3} established that there are only finitely many Erd\H{o}s matrices in $\Omega_n$ for each $n\in \N$ and \cite{tripathi2025some}*{Theorem 1.6} showed that every Erd\H{o}s matrix has rational entries. It was also observed in the discussion following~\cite{tripathi2025some}*{Proposition 4.3} that there are at least $p(n)$ many (non-equivalent) Erd\H{o}s matrices, where $p(n)$ is the number of partitions of $n$. In fact,~\cite{tripathi2025some}*{Proposition 4.3} shows that $\frac{1}{2}(I_n+P)$ is an Erd\H{o}s matrix for any permutation matrix $P$. 
However, many natural questions remain unanswered. We note the following two questions (that remain open):
\begin{enumerate}
    \item Let $\mathfrak{E}_n$ denote the set of all non-equivalent Erd\H{o}s matrices in $\Omega_n$. How does $|\mathfrak{E}_n|$ grow asymptotically?
    \item Is there an efficient way to generate (all) Erd\H{o}s matrices?
    \item It seems that $|\mathfrak{E}_n|\geq n!$. Is there a construction that gives $n!$ many non-equivalent Erd\H{o}s matrices?
\end{enumerate}
We should point out that one can get an upper bound on $|\mathfrak{E}_n|$ using~\cite{tripathi2025some}*{Remark 1.4}, but the upper bound is too loose to be of any use. We know that $|\mathfrak{E}_2|=2$ and $|\mathfrak{E}_3|=6$ from~\cite{bouthat2024question}. Combined with Tripathi's observation~\cite{tripathi2025some}*{Proposition 4.3} that $|\mathfrak{E}_n|\geq p(n)$, it is reasonable to wonder if $|\mathfrak{E}_n|$ grows like $n!$. In this paper, we show that $|\mathfrak{E}_4|=41>4!$.

\subsection*{Contributions of this paper}
It was observed in~\cite{tripathi2025some}*{Proposition 2.1} that $\Delta_n(\Omega_n)=[0, (n-1)/4]$, and that, up to equivalence, there is a unique matrix $M_n=\frac{1}{2}\round{I_n+J_n}\in \Omega_n$ satisfying $\Delta_n(M_n)=(n-1)/4$. More generally,~\cite{tripathi2025some}*{Section 2} introduced $\alpha$-Erd\H{o}s matrices as the matrices $A\in \Omega_n$ satisfying $\Delta_n(A)=\alpha$ for $\alpha\in [0, (n-1/4])$. Note that $0$-Erd\H{o}s matrices correspond precisely to the Erd\H{o}s matrices. An easy calculation shows that there are only finitely many $\alpha$-Erd\H{o}s matrices in $\Omega_{2}$ for any $\alpha\in (0,1/4)$. However, we show that this happens only in dimension $n=2$. More precisely, in Section~\ref{sec:AlphaErdos}, we prove the following. 

\begin{mythm}{~\ref{thm:AlphaErdos_genDim}}
Let $n \ge 3$. If $\alpha\in (0, (n-1)/4)$, then there are (uncountable) infinitely many $n\times n$ symmetric bistochastic matrices $A$ satisfying 
\[\Delta_{n}(A) = \mathrm{maxtrace}(A)-\|A\|_{F}^2 = \alpha\;.\]
\end{mythm}


In Section~\ref{sec:Erdos4}, we compute all $4\times 4$ non-equivalent Erd\H{o}s matrices following the proof technique in~\cite{tripathi2025some}*{Section 3}. 
\begin{mythm}{~\ref{thm:41Erdos}}
    There are exactly $41$ (up to the equivalence) $4 \times 4$ Erd\H{o}s matrices. Further, if we identify matrices up to transposition (i.e., consider $A \sim A^T$), there are $32$ equivalence classes. 
\end{mythm}
We should note that in dimensions $n=2$ and $3$, every Erd\H{o}s matrix is equivalent to a symmetric bistochastic matrix. However, in dimension $n=4$, there are Erd\H{o}s matrices that are not equivalent to any symmetric matrix. This observation was already made in~\cite{bouthat2024question}. It makes sense to identify an Erd\H{o}s matrix $A$ with $A^{T}$ and note that even with this identification, the number of non-equivalent Erd\H{o}s matrices remains $32>4!$. Finding a good lower and upper bound on the number of non-equivalent Erd\H{o}s matrices remains an open problem.

In Section~\ref{sec:Observations_and_Conjectures}, we discuss several problems/conjectures that naturally arise during our investigation of $4\times 4$ Erd\H{o}s matrices. Many of these problems are of independent interest. We also provide numerical results and/or heuristics for these problems/conjectures. We hope that it will provide impetus to further research. 

Finally, in Section~\ref{sec:Generalizations_and_Interpretations} we generalize and prove the Marcus--Ree inequality~\eqref{eqn:MarcusRee} for infinite arrays and kernels. We say that an infinite array $A$ is bistochastic if it has nonnegative entries and each row and column sums to $1$. We prove a version of the Marcus--Ree inequality for such arrays in Section~\ref{sec:MR_infiniteArray}.
\begin{mythm}{~\ref{thm:MR_array}}
Let $A, B$ be bistochastic arrays satisfying 
$\sum_{i, j\in \N}A(i, j)^2,  \sum_{i, j\in \N} B(i, j)^2 <\infty$.
Then, 
\[\langle A, B\rangle \leq \sup_{\sigma\in S_{\N}} \sum_{i=1}^{\infty}A(i, \sigma(i))\;,\]
where $S_{\N}$ is the set of all bijections $\sigma:\N\to \N$ that fix all but finitely many elements in $\N$.
In particular, we also have 
\[\|A\|_{\ell^2}^2\leq \sup_{\sigma\in S_{\N}} \sum_{i=1}^{\infty}A(i, \sigma(i))\;.\]
\end{mythm}
We refer the reader to Example~\ref{exp:BistochasticArray} for an example of a bistochastic array $A$ satisfying the conditions of the above theorem and for which the maximal trace $\sup_{\sigma\in S_{\N}} \sum_{i=1}^{\infty}A(i, \sigma(i))$ is finite.

In Section~\ref{sec:MR_kernel}, we prove a Marcus--Ree inequality for kernels. A \emph{bistochastic kernel} is a bounded function $W:[0, 1]^2\to [0, \infty]$ such that $\int W(x, y)\diff y=\int W(y, x)\diff y=1$ for a.e. $x\in [0, 1]$. For such kernels, we have the following result (slightly paraphrased). We refer the reader to Section~\ref{sec:MR_kernel} for more details.
\begin{mythm}{~\ref{thm:continum}}
Let $W$ be a continuous bistochastic kernel. Then, 
    \[\sup_{\pi\in \Pi}\int W(x, y)\pi(\diff x\diff y) \leq \sup_{T\in \Tcal}\int W(x, T(x))\diff x \;,\]
    where $\Pi$ is the set of all couplings of Lebesgue measure on $[0, 1]$ and $\Tcal$ is the set of all Lebesgue measure preserving maps on $[0, 1]$.
\end{mythm}
We end with an economic interpretation of Marcus--Ree inequality in Section~\ref{sec:EconomicInterpretation}. We hope that these extensions and interpretations will open up several interesting possibilities for future research.

\section{\texorpdfstring{$\alpha$-Erd\H{o}s matrix}{alpha-Erdos matrix}}
\label{sec:AlphaErdos}

The goal of this section is to prove the following theorem.
\begin{theorem}
\label{thm:AlphaErdos_genDim}
Let $n \ge 3$. If $\alpha\in (0, (n-1)/4)$, then there are (uncountable) infinitely many $n\times n$ symmetric bistochastic matrices $A$ satisfying 
\[\Delta_{n}(A) = \mathrm{maxtrace}(A)-\|A\|_{F}^2 = \alpha\;.\]
\end{theorem}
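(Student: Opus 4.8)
The plan is to exhibit an explicit one-parameter family of symmetric bistochastic matrices whose Marcus--Ree defect $\Delta_n$ varies continuously and surjects onto $(0,(n-1)/4)$; since a continuous function from an interval onto $(0,(n-1)/4)$ must take every intermediate value uncountably often (or rather, take \emph{some} value uncountably often — here we want every value, so we want the family to be large), the cleanest route is to build a two-parameter family and show its image covers an open set, or to build, for each fixed target $\alpha$, a one-parameter family all of whose members have defect exactly $\alpha$. The latter is what the statement really asks for, so I would aim directly at that.

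First I would reduce to $n=3$ by the direct-sum trick: if $A\in\Omega_3$ is symmetric with $\Delta_3(A)=\alpha$, then $A\oplus J_{n-3}$ (or $A\oplus I_{n-3}$, whichever keeps us inside the allowed range and is easier to analyze) is symmetric bistochastic in $\Omega_n$; one checks $\mathrm{maxTrace}$ and $\|\cdot\|_F^2$ are additive over direct sums, and $\Delta(J_{n-3})=0$, $\Delta(I_{n-3})=0$, so $\Delta_n(A\oplus\cdot)=\alpha$. However, the range for $n=3$ is only $(0,1/2)$, so for larger $\alpha\in(0,(n-1)/4)$ I would instead take a direct sum of an $\alpha'$-Erd\H{o}s block for small $\alpha'$ with copies of the maximizer $M_k=\tfrac12(I_k+J_k)$, whose defect is $(k-1)/4$; choosing the number of such blocks and the residual $\alpha'$ appropriately lets one hit any $\alpha\in(0,(n-1)/4)$. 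So the whole problem collapses to: \emph{for every sufficiently small $\alpha>0$ there are uncountably many symmetric bistochastic $3\times 3$ (or fixed small-dimensional) matrices with defect $\alpha$.}

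For the base case I would write down a concrete family. A natural candidate is the symmetric circulant-type matrix
\[
A_t=\begin{pmatrix} a & b & b\\ b & a & b\\ b & b & a\end{pmatrix},\qquad a+2b=1,\ a,b\ge 0,
\]
a one-parameter family (parameter $t=a\in[0,1]$). Here $\|A_t\|_F^2=3a^2+6b^2=3a^2+\tfrac32(1-a)^2$, and $\mathrm{maxTrace}(A_t)=\max(3a,\,2b+b)=\max(3a, 3b)$ which equals $3a$ for $a\ge 1/3$ and $3b=3(1-a)/2$ for $a\le 1/3$. Thus $\Delta_3(A_t)$ is an explicit piecewise-quadratic function of $a$; one computes it vanishes at $a=0,\,1/3,\,1$ (these are $J_2\oplus\cdot$-type and the all-equal matrix) and is strictly positive in between, attaining a maximum $<1/2$. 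Since this is a \emph{single} continuous curve it only gives countably — in fact finitely — many preimages for each $\alpha$, so it is \emph{not} enough by itself; I need a genuinely higher-dimensional family. So instead I would use a two- or three-parameter family, e.g.
\[
A_{(x,y,z)}=\begin{pmatrix} 1-x-y & x & y\\ x & 1-x-z & z\\ y & z & 1-y-z\end{pmatrix},
\]
symmetric bistochastic for $x,y,z\ge 0$ small, and argue that on the open region where the off-diagonal entries are all small, $\mathrm{maxTrace}$ is achieved by the identity permutation (so $\mathrm{maxTrace}=3-2(x+y+z)$) and hence $\Delta_3 = 3-2(x+y+z)-\|A\|_F^2$ is a genuine (non-degenerate) function of three variables; its level sets near a point where it is positive are then $2$-dimensional, hence uncountable.

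The main obstacle — and the step requiring real care — is controlling $\mathrm{maxTrace}$, since it is a maximum over $S_n$ and the identity permutation is the maximizer only on a subregion of parameter space; one must identify an open subregion on which a \emph{single} fixed permutation wins, so that $\Delta_n$ becomes a smooth function there with non-vanishing gradient, and then check that the target value $\alpha$ lies in the image of $\Delta_n$ restricted to that subregion. Concretely I would: (i) fix the ``winning'' permutation to be the identity and restrict to off-diagonal entries small enough that no other permutation sum can exceed $\mathrm{trace}(A)$ (a crude bound: if all off-diagonal entries are $<\tfrac1n$ times the smallest diagonal gap, identity wins); (ii) on that region, show $\Delta_n(A)=n-\sum_i(\text{off-diagonal row mass of row }i)-\|A\|_F^2$ has image an interval $(0,\epsilon_0)$ as the parameters range over the region, by evaluating at the identity ($\Delta=0$) and along a curve into the interior; (iii) invoke the implicit function theorem (or just continuity plus the fact that a continuous surjection of a connected set of topological dimension $\ge 2$ onto an interval has uncountable fibers — more simply, exhibit a 1-parameter subfamily through each level) to conclude each $\alpha$ in that image has uncountably many preimages; (iv) finally combine with the direct-sum reduction of the second paragraph to push the achievable range up to all of $(0,(n-1)/4)$ and to all $n\ge 3$. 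The delicate bookkeeping is entirely in steps (i) and (iv): making the ``small off-diagonal'' region explicit, and checking the direct-sum combinations of small-$\alpha'$ blocks with $M_k$-blocks really do sweep out the full open interval $(0,(n-1)/4)$ without gaps.
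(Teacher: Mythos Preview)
Your base-case strategy for small $\alpha$ (steps (i)--(iii)) is sound in spirit and differs from the paper only in that you argue topologically via level sets, whereas the paper writes down an explicit one-parameter family of symmetric $3\times 3$ bistochastic matrices with $\Delta_3\equiv\alpha$ for each $\alpha\in(0,1/2)$. Either route is workable for small $\alpha$, and your direct-sum trick $A\oplus I_{n-3}$ matches what the paper does to transport that range into $\Omega_n$.

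The genuine gap is step (iv). Direct sums cannot sweep out all of $(0,(n-1)/4)$: if $A=A_1\oplus\cdots\oplus A_r\in\Omega_n$ with $A_i\in\Omega_{m_i}$ and $r\ge 2$, then $\Delta_n(A)=\sum_i\Delta_{m_i}(A_i)\le\sum_i(m_i-1)/4=(n-r)/4\le(n-2)/4$. So no $\alpha\in[(n-2)/4,\,(n-1)/4)$ is reachable by \emph{any} nontrivial direct sum, let alone by an uncountable family of them. Even with optimal bookkeeping your construction stops a full $1/4$ short of the target endpoint; this reflects the fact that the maximizer $M_n=\tfrac12(I_n+J_n)$ is unique up to equivalence and is not itself a direct sum.

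The paper handles large $\alpha$ by a different mechanism. Having produced uncountably many $\widetilde A=A\oplus I_{n-3}$ with $\Delta_n(\widetilde A)=\alpha_0$ for one fixed small $\alpha_0$, it interpolates each linearly with the global maximizer, $\phi_{A}(t)=t\widetilde A+(1-t)M_n$. Since $\Delta_n(\phi_{A}(0))=(n-1)/4$ and $\Delta_n(\phi_{A}(1))=\alpha_0$, the intermediate value theorem yields a point on each curve with defect exactly $\alpha$. The remaining work is to check that distinct $A\ne B$ give curves $\phi_{A},\phi_{B}$ disjoint on $(0,1)$, which follows from a coordinate comparison (the $(4,1)$ entry of every $\widetilde A$ vanishes while that of $M_n$ does not). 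You could repair your argument by replacing the direct-sum bootstrap with this interpolation, or alternatively by extending your level-set argument all the way to a neighborhood of $M_n$ (where the identity permutation still realizes $\mathrm{maxtrace}$, so $\Delta_n$ remains smooth there); as written, however, step (iv) fails.
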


In Proposition~\ref{prop:AlphaErdos3}, we establish a special case of Theorem~\ref{thm:AlphaErdos_genDim} when $n=3$, and we conclude Theorem~\ref{thm:AlphaErdos_genDim} from this special case.  

\begin{proposition}
\label{prop:AlphaErdos3}
    Let $\alpha \in (0,1/2)$. There are (uncountable) infinitely many $3\times 3$ symmetric bistochastic matrices satisfying 
    \[ \|A\|_{F}^2=\mathrm{maxtrace}(A)-\alpha\;.\]
\end{proposition}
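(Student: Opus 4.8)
The plan is to reduce everything to a single full‑dimensional region of the polytope of symmetric bistochastic $3\times 3$ matrices, on which $\Delta_3$ is an honest quadratic and its level sets can be written down by hand. First I would parametrize a symmetric bistochastic $A$ by its off‑diagonal entries $x=A_{12}$, $y=A_{13}$, $z=A_{23}$ (so that $A_{11}=1-x-y$, $A_{22}=1-x-z$, $A_{33}=1-y-z$), under which such matrices correspond to the polytope $\{x,y,z\ge 0,\ x+y\le 1,\ y+z\le 1,\ z+x\le 1\}\subseteq\Rd{3}$. Writing out the six diagonal sums $\sum_i A_{i,\sigma(i)}$ over $\sigma\in S_3$: the identity gives $3-2s$ with $s:=x+y+z$, the two $3$-cycles give $s$, and the three transpositions give $1+2x-y-z$ together with its two cyclic analogues. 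I would then let $\Ucal$ be the sub‑polytope on which the identity attains $\mathrm{maxtrace}(A)$; beyond nonnegativity of the entries this is cut out by the linear inequalities $s\le 1$, $4x+y+z\le 2$, and the two cyclic versions of the latter.

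On $\Ucal$ one has $\mathrm{maxtrace}(A)=3-2s$, and a one‑line expansion of the Frobenius norm gives $\|A\|_F^2=3-4s+3(x^2+y^2+z^2)+s^2$, so that
\[
\Delta_3(A)=\mathrm{maxtrace}(A)-\|A\|_F^2=1-(s-1)^2-3(x^2+y^2+z^2),\qquad (x,y,z)\in\Ucal.
\]
Consequently, for $\alpha<1/2$ the level set $\{\Delta_3=\alpha\}$ is, inside $\Ucal$, a piece of an ellipsoid centred at $(\tfrac16,\tfrac16,\tfrac16)$; this centre lies in the interior of $\Ucal$ (all the defining inequalities hold strictly there), and $\Delta_3(\tfrac16,\tfrac16,\tfrac16)=\tfrac12$.

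Now I would fix $\alpha\in(0,1/2)$ and set $\lambda:=\tfrac16\bigl(1+\sqrt{1-2\alpha}\bigr)\in(\tfrac16,\tfrac13)$. Along the diagonal, $\Delta_3(\mu\one)=6\mu(1-3\mu)$ is strictly decreasing on $(\tfrac16,\tfrac13)$ and equals $\alpha$ exactly at $\mu=\lambda$, while $\lambda\one$ satisfies every defining inequality of $\Ucal$ strictly, because the ones that could bind reduce to $\lambda<\tfrac13$. Hence $\lambda\one\in\mathrm{int}(\Ucal)$ and $\Delta_3(\lambda\one)=\alpha$. Intersecting the ellipsoid $\{\Delta_3=\alpha\}$ with the plane $\{x+y+z=3\mu\}$ produces the circle of all vectors $\mu\one+u$ with $u\in\Rd{3}$, $u_1+u_2+u_3=0$ and $\|u\|^2=\tfrac13\bigl(6\mu(1-3\mu)-\alpha\bigr)$; this has strictly positive radius for every $\mu\in(\tfrac16,\lambda)$ and collapses to the single point $\lambda\one$ as $\mu\uparrow\lambda$. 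Taking $\mu<\lambda$ close enough to $\lambda$, the entire circle lies in a ball about $\lambda\one$ contained in $\mathrm{int}(\Ucal)$; each of its uncountably many points then yields a distinct symmetric bistochastic matrix $A$ for which, because $(x,y,z)\in\Ucal$, $\mathrm{maxtrace}(A)-\|A\|_F^2=\alpha$, i.e. $\|A\|_F^2=\mathrm{maxtrace}(A)-\alpha$.

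The genuinely delicate point is the bookkeeping in the last step: the clean quadratic formula for $\Delta_3$ is valid only on $\Ucal$, so one must make sure the candidate circle does not leave $\Ucal$ — equivalently, that the identity keeps winning $\mathrm{maxtrace}$ and that all entries stay nonnegative. This is tightest when $\alpha$ is near $0$, where the solutions sit far from the centre $(\tfrac16,\tfrac16,\tfrac16)$; the reason for moving along the all‑ones direction is precisely that the two constraints which can fail, $x+y+z\le 1$ and $4x+y+z\le 2$, become tight only in the limit $\alpha\to 0$. This is consistent with the fact that the conclusion genuinely fails at $\alpha=0$ (where the Erd\H{o}s matrices are finite in number) and at $\alpha=1/2$ (where the maximiser is unique).
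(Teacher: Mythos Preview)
Your argument is correct and takes a genuinely different route from the paper's. The paper restricts from the outset to the two‑parameter slice of symmetric bistochastic matrices with $A_{11}=A_{22}$ (in your coordinates, the plane $y=z$), writes down an explicit interval $[a(\alpha),b(\alpha)]$ and a closed‑form $z(x)$ solving the relevant quadratic, and then verifies by hand that the resulting matrices are bistochastic and that the identity attains $\mathrm{maxtrace}$. Your approach instead works with the full three‑dimensional parametrization, derives the clean formula $\Delta_3=1-(s-1)^2-3(x^2+y^2+z^2)$ on the chamber $\Ucal$ where the identity wins, and then uses a soft topological argument: locate an interior point $\lambda\one$ on the level set $\{\Delta_3=\alpha\}$ and observe that nearby circular slices of the ellipsoid stay inside $\Ucal$. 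What the paper's approach buys is explicitness --- one can read off actual $\alpha$-Erd\H{o}s matrices --- at the cost of case splits and unilluminating verifications left to the reader. What your approach buys is conceptual clarity and brevity: once the quadratic is written down, the existence of an uncountable level set is essentially the implicit function theorem (or just that a $2$‑sphere meets an open set in a $2$‑dimensional patch), and the delicate endpoint behaviour at $\alpha\in\{0,1/2\}$ becomes transparent. Your remark that the binding constraints $s\le 1$ and $4x+y+z\le 2$ become tight exactly as $\alpha\to 0$ is a nice structural observation that the paper's explicit computations obscure.
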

The proof of Proposition~\ref{prop:AlphaErdos3} is inspired from~\cite{bouthat2024question} where the authors characterize all $3 \times 3$ Erd\H{o}s matrices. Before we begin the proof, we describe the approach of~\cite{bouthat2024question} for characterizing $3\times 3$ Erd\H{o}s matrices. It is known due to Marcus and Ree that any Erd\H{o}s matrix other than $J_3$ must have a zero-entry. Let $A\in \Omega_3$ be an Erd\H{o}s matrix. Replacing $A$ by $AP$ for some suitable permutation matrix $P$ we can assume that $\mathrm{maxtrace}(A)=\mathrm{Trace}(A)$. Therefore, the authors in~\cite{bouthat2024question} consider a family of bistochastic matrices parametrized by three-parameter $(x, y, z)$
\[A = \begin{pmatrix}
    x & z & * \\ 0 & y & * \\ * & * & *
\end{pmatrix}\;.\]
The authors first characterize the matrices of the above form satisfying $\|A\|_{F}^2=\mathrm{Trace}(A)$. This gives a quadratic equation in $3$ variables, and solving for $z$ in terms of $x$ and $y$, we obtain a two-dimensional feasible region of $(x, y)$ for which $z$ is nonnegative. And, for each point in this feasible region, there are at most two distinct values of $z$. Now, the authors consider the condition $\mathrm{maxtrace}(A)=\mathrm{Trace}(A)$. This determines $5$ linear constraints. Erd\H{o}s matrices correspond to precisely those pairs $(x, y)$ in the feasible region that are in the intersection of these $5$ constraints as well. The authors in~\cite{bouthat2024question} explicitly determine these regions and obtain a finite list of $(x, y)$ that correspond to the Erd\H{o}s matrices.

We now explain how we adapt this approach for our purposes. For a general $\alpha$-Erd\H{o}s matrix, the Marcus--Ree result no longer holds, and thus we need to parametrize a bistochastic matrix with $4$ parameters instead of $3$. As a first attempt, we restrict ourselves to a smaller dimensional sub-manifold of Birkhoff polytope, namely, symmetric bistochastic matrices that we parametrize by 
\[ A = \begin{pmatrix}
    x & z & * \\ 
    z & y & * \\
    * & * & *
\end{pmatrix}\;.\]
Our goal is to find $\alpha$-Erd\H{o}s matrices in this sub-manifold. To this end, for each $\alpha$, we look for a two-dimensional feasible region of $(x, y)$ for which $z$ is nonnegative and such that $A$ satisfies $\|A\|_{F}^2=\mathrm{Trace}(A)-\alpha$. And, we consider the region determined by the constraint $\mathrm{maxtrace}(A)=\mathrm{Trace}(A)$. Somewhat surprisingly, we found that for each $\alpha$, there is an interval $I_{\alpha}$ such that $\{(x, x): x\in I_{\alpha}\}$ is contained in the intersection of the feasible region satisfying both constraints. 

We must point out that unlike~\cite{bouthat2024question}, we do not attempt to characterize all $\alpha$-Erd\H{o}s matrices. Many of our reductions cannot be made without a loss of generality. Therefore, finding all $\alpha$-Erd\H{o}s matrices with this approach seems daunting. However, our goal is to only obtain infinitely many $\alpha$-Erd\H{o}s matrices as this provides a clear contrast between $\alpha\in \{0, (n-1)/4\}$ and $\alpha\in (0, (n-1)/4)$.

\begin{proof}[Proof of Proposition~\ref{prop:AlphaErdos3}]
    For each $\alpha\in (0, 1/2)$, we define an interval $[a(\alpha), b(\alpha)]$ as follows. 
    \[[a(\alpha),b(\alpha)]=
    \begin{cases}
        \left[\frac{5}{12}-\frac{1}{12} \sqrt{1-4 \alpha }\;,\;\frac{2}{3}-\frac{1}{3} \sqrt{1-2 \alpha }\:\right], & \text{if  } 0 \le \alpha \leq \frac{2}{9},\\[7pt]
        \left[\frac{2}{3}-\frac{1}{6} \sqrt{5-10 \alpha }\; , \; \frac{2}{3}-\frac{1}{3} \sqrt{1-2 \alpha }\:\right], & \text{if  } \frac{2}{9} < \alpha <\frac{1}{2}\;.\\
    \end{cases}\]
    It is easy to verify that for each $\alpha$, the interval $[a(\alpha), b(\alpha)]$ has a non-empty interior. Let $\alpha \in (0, 1/2)$ and let $x\in [a(\alpha), b(\alpha)]$, define 
    \[z=z(x)=\dfrac{7-8 x-\sqrt{-36 x^2+48 x-11-10 \alpha}}{10}\;.\]
    We leave the reader the verification that $z$ is a real number such that $z\in [0, 1-x]$ and furthermore $z$ solves the quadratic equation 
    \begin{equation}
    \label{eqn:Quadratic}
         10z^2 + (16x-14)z+ (10x^2-16x+6+\alpha)=0\;.
    \end{equation}
    Define a bistochastic matrix $A=A(x)$ such that
     \[A=  \begin{pmatrix}
     x & z & 1-x-z \\
     z & x & 1-x-z\\
     1-x-z & 1-x-z &  2x+2z-1
     \end{pmatrix}\]
    The proof is now complete if we can show that $A$ satisfies $\|A\|_{F}^2=\mathrm{maxtrace}(A)-\alpha$. This is a straightforward verification. For the convenience of the reader, we outline the steps. We do this in two steps. 
    \subsubsection*{Step I:} We first show that $A$ satisfies $\|A\|_{F}^2=\mathrm{Trace}(A)-\alpha$. To this end, note that by construction, we have
    \begin{align*}
        \|A\|_{F}^2 &= 10z^2 +16 xz -12z +10x^2 -12x +5\;,\\
        \mathrm{Trace}(A)&= 2z+4x-1\;.
    \end{align*}
    Using the fact that $z$ solves~\eqref{eqn:Quadratic}, it is now easily seen that $\|A\|_{F}^2=\mathrm{Trace}(A)-\alpha$.
    \subsubsection*{Step II:}
    We now show that $A$ satisfies $\mathrm{Trace}(A)=\mathrm{maxtrace}(A)$. This involves checking the inequalities $\mathrm{Trace}(A)\geq \mathrm{Trace}(AP)$ where $P$ is a permutation matrix. Since $A$ is symmetric and has only $2$ unique diagonal entries, we need to verify only three inequalities, $\mathrm{Trace}(A)\geq \mathrm{Trace}(AP_{\sigma})$ corresponding to the permutations $\sigma\in \{(12), (23), (123)\}$. We enumerate these inequalities below
    \begin{enumerate}
        \item $x\geq z$,
        \item $5x+4z\geq 3$, and
        \item $2x+z\geq 1$.
    \end{enumerate}
    These inequalities can be easily checked from the formula of $x, z$. We skip the routine calculations.
       
\end{proof}

A priori, it is not clear if one can restrict oneself to a low-dimensional submanifold of $\Omega_n$ where one already has uncountably many solutions to $\Delta_n(A)=\alpha$ so as to adapt the proof of Proposition~\ref{prop:AlphaErdos3} in dimensions $n\geq 4$. At the end of this section, we give a one-parameter family of $\alpha$-Erd\H{o}s matrices in $\Omega_n$. But first, we give a different strategy to prove Theorem~\ref{thm:AlphaErdos_genDim}. It is natural to attempt to construct an $\alpha$-Erd\H{o}s matrix in $\Omega_n$ using the $3\times 3$ matrices. For instance, if $A\in \Omega_3$ is an $\alpha$-Erd\H{o}s matrix, then the matrix $\widetilde{A}\in \Omega_n$ defined as 
\[\widetilde{A}_n \coloneqq \begin{pmatrix}
    A & 0 \\ 0 & I_{n-3}
 \end{pmatrix},\]
 is also an $\alpha$-Erd\H{o}s matrix for the same $\alpha$. This immediately yields uncountably many $\alpha$-Erd\H{o}s matrices in dimension $n\geq 3$ for $\alpha\in (0, 1/2)$. To prove Theorem~\ref{thm:AlphaErdos_genDim}, we need the 
 following result, stated in~\cite{tripathi2025some}*{Proposition 2.1}, where it is attributed to Ottolini and Tripathi.

\begin{proposition}
\label{prop:AlphaMax}
    For each $n\geq 1$, we have
    \[ \max_{A \in \Omega_n}\Delta_n (A)=(n-1)/4 \;.\]
    Moreover, the maximum of $\Delta_n$ on $\Omega_n$ is achieved by the unique (up to the equivalence) matrix $M_n\coloneqq \frac{1}{2}I_n+\frac{1}{2}J_n$.
\end{proposition}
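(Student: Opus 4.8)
The plan is to prove the two assertions separately: first the bound $\Delta_n(A)\le (n-1)/4$ for all $A\in\Omega_n$, then the identification of the unique maximizer. For the upper bound, I would start from the defining identity $\Delta_n(A)=\mathrm{maxtrace}(A)-\|A\|_F^2$ and bound the two pieces in opposite directions. The key observation is that if $\sigma^\star\in S_n$ achieves the maximal trace, then after replacing $A$ by the equivalent matrix $A P_{\sigma^\star}^{-1}$ (which changes neither $\Delta_n$ nor $\|A\|_F^2$, only permutes columns) we may assume $\mathrm{maxtrace}(A)=\mathrm{Trace}(A)=\sum_i A_{ii}$. So it suffices to show $\sum_i A_{ii}-\sum_{i,j}A_{ij}^2\le (n-1)/4$. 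I would split the sum over entries into diagonal and off-diagonal parts and write $\sum_i A_{ii}-\sum_i A_{ii}^2-\sum_{i\ne j}A_{ij}^2=\sum_i A_{ii}(1-A_{ii})-\sum_{i\ne j}A_{ij}^2$. Since each $A_{ii}\in[0,1]$, we have $A_{ii}(1-A_{ii})\le 1/4$, giving a crude bound of $n/4$; to sharpen this to $(n-1)/4$ one must exploit the bistochasticity constraint, namely that the off-diagonal entries in each row/column cannot all vanish unless $A$ is a permutation matrix, and more quantitatively that $\sum_j A_{ij}=1$ forces a relation between $A_{ii}$ and the off-diagonal mass $1-A_{ii}$ in row $i$. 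A clean way to extract the extra $1/4$: note $\sum_i A_{ii}(1-A_{ii})-\sum_{i\ne j}A_{ij}^2 \le \frac14\sum_i\big(1-(1-2A_{ii})^2\big) - \sum_{i\ne j}A_{ij}^2$, and the constraint $1-A_{ii}=\sum_{j\ne i}A_{ij}$ together with Cauchy--Schwarz on each row ties these terms together; carrying the bookkeeping through, one entry's worth of $1/4$ is lost because the ``slack'' in the Cauchy--Schwarz estimates, summed over rows, is nonnegative and generically positive.

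For the uniqueness and attainment statement, I would first verify directly that $M_n=\tfrac12 I_n+\tfrac12 J_n$ achieves $(n-1)/4$: its diagonal entries are $\tfrac12+\tfrac1{2n}$, off-diagonal entries are $\tfrac1{2n}$, the identity permutation is optimal (one should check $\mathrm{maxtrace}(M_n)=\mathrm{Trace}(M_n)$, which follows since increasing any off-diagonal contribution trades a diagonal $\tfrac12+\tfrac1{2n}$ for an off-diagonal $\tfrac1{2n}$), and then $\Delta_n(M_n)=n(\tfrac12+\tfrac1{2n})-\big(n(\tfrac12+\tfrac1{2n})^2+n(n-1)\tfrac1{4n^2}\big)$ simplifies to $(n-1)/4$. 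For uniqueness, I would trace back through the chain of inequalities in the upper-bound proof and characterize the equality case: equality in $A_{ii}(1-A_{ii})\le 1/4$ forces every diagonal entry that ``counts'' to equal $1/2$, equality in the Cauchy--Schwarz steps forces the off-diagonal entries in each row to be equal, and the bistochasticity then pins down those equal values to be $1/(2(n-1))$ or $1/(2n)$ depending on which normalization the equality analysis yields; reconciling the row and column conditions simultaneously forces the matrix to be $M_n$ up to relabeling, i.e.\ up to equivalence. One must be slightly careful that the reduction ``$\mathrm{maxtrace}=\mathrm{Trace}$'' was made only up to equivalence, so the conclusion is uniqueness up to equivalence, exactly as stated.

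The main obstacle I anticipate is the sharp constant: getting $n/4$ is immediate, but squeezing out the final $(n-1)/4$ requires genuinely using that $A$ is bistochastic and not merely that its entries lie in $[0,1]$ with the right diagonal sum, and then re-running that same argument in the equality case to force $A=M_n$ up to equivalence. A convenient route around the delicate hand computation is to invoke convexity: $\Delta_n$ is not concave, but $A\mapsto \mathrm{Trace}(A)-\|A\|_F^2$ restricted to the face $\{\,\mathrm{maxtrace}=\mathrm{Trace}\,\}$ is concave in $A$ (the trace term is linear and $-\|A\|_F^2$ is concave), so its maximum over that polytope is attained, and one can look for it via Lagrange/KKT conditions against the bistochasticity constraints $\sum_j A_{ij}=1$, $\sum_i A_{ij}=1$ and the nonnegativity constraints; the stationarity condition reads $\delta_{ij}-2A_{ij}=u_i+v_j$ on the support, which after using symmetry of the constraints and the structure of the active set yields $A_{ij}$ affine in $\delta_{ij}$, i.e.\ $A=cI_n+(1-c)J_n$ for a scalar $c$, and then optimizing the single variable $c\in[0,1]$ gives $c=1/2$ and the value $(n-1)/4$. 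This KKT approach simultaneously delivers the bound, the maximizer, and (via strict concavity on the relevant subspace) its uniqueness, and I would present the proof along these lines, relegating the verification that no boundary face of the polytope does better to a short separate check.
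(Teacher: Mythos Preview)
The paper does not prove this proposition at all; it is quoted from \cite{tripathi2025some} (where it is attributed to Ottolini and Tripathi) and used as a black box in the proof of Theorem~\ref{thm:AlphaErdos_genDim}. So there is no in-paper argument to compare your proposal against.

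On the substance of your proposal: your first (``direct bookkeeping'') approach is not a proof as written. You correctly isolate the difficulty---the crude bound $A_{ii}(1-A_{ii})\le 1/4$ gives only $n/4$, and the missing $1/4$ must come from the bistochastic constraint---but the sentence ``Cauchy--Schwarz on each row \dots\ carrying the bookkeeping through, one entry's worth of $1/4$ is lost'' does not supply the mechanism. There is a genuine computation to do here, and you have not done it.

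Your second (concavity/KKT) approach, by contrast, is essentially a complete and correct proof. After the reduction $\mathrm{maxtrace}(A)=\mathrm{Trace}(A)$, the function $A\mapsto \mathrm{Trace}(A)-\|A\|_F^2$ is strictly concave on the affine subspace $\{A:\text{all row and column sums equal }1\}$; your Lagrange computation correctly identifies the unique critical point on that subspace as $M_n$, and the value there is $(n-1)/4$. One simplification: you do \emph{not} need to restrict to the polytope $\{\mathrm{maxtrace}=\mathrm{Trace}\}\cap\Omega_n$ and then worry about boundary faces. Maximize instead over the entire affine hull (row/column sums $=1$, no sign or trace constraints); since $\Omega_n$ sits inside that hull, the unique affine maximizer $M_n$ already dominates every $A\in\Omega_n$, and since $M_n$ happens to lie in $\Omega_n$ with $\mathrm{maxtrace}(M_n)=\mathrm{Trace}(M_n)$, the bound is sharp. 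Strict concavity on the affine hull then gives uniqueness immediately, with no separate boundary analysis required.
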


\begin{proof}[Proof of Theorem$~\ref{thm:AlphaErdos_genDim}$]
    Let $n\geq 4$ and let $\alpha\in (0, (n-1)/4)$. As discussed above if $\alpha\in (0, 1/2)$, we can construct uncountably many $\alpha$-Erd\H{o}s matrices
     $\widetilde{X}=X\oplus I_{n-3}$ where $X\in \Omega_{3}$ is an $\alpha$-Erd\H{o}s matrix. Therefore, we assume that $\alpha \in [1/2,(n-1)/4]$. Fix $\alpha_0\in (0, 1/2)$. Let $A\in \Omega_3$ be such that $\Delta_3(A)=\alpha_0$ and let $\widetilde{A}=A\oplus I_{n-3}\in \Omega_n$. Define a curve $t\mapsto \phi_{n,A}(t)\coloneqq t\widetilde{A}+(1-t)M_n$ where $M_n$ is as in~Proposition~\ref{prop:AlphaMax}. Observe that 
   \[\Delta_n(\phi_{n,A}(0))=(n-1)/4, \quad \Delta_n(\phi_{n,A}(1))=\alpha_0\;.\]
   By the intermediate value theorem, there exists $s=s_{A}\in (0, 1)$ such that $\Delta_n(\phi_{n, A}(s_A))=\alpha$. The proof is complete if we show that whenever $A\neq B$, the curves $\phi_{n, A}$ and $\phi_{n, B}$ do not intersect (except at the endpoints). We argue this by contradiction. To this end, let $A\neq B$ be fixed and assume that there exists $t_{A}, t_{B}\in (0, 1)$ such that $\phi_{n, A}(t_{A})=\phi_{n, B}(t_{B})$. If $t_{A}=t_{B}=t\neq 0$, this would mean 
   \[t \widetilde{A}=t\widetilde{B},\]
   which is a contradiction. We now assume that $1>t_{A}> t_{B}>0$ without loss of generality. Note that $\phi_{n, A}(t_{A})=\phi_{n, B}(t_{B})$ implies $t_{A}\widetilde{A}-t_{B}\widetilde{B}=(t_{A}-t_{B})M_n$. But observe that the $(4, 1)$ entry of the matrix $(t_{A}-t_{B})M_n$ is $(t_{A}-t_{B})/2n\neq 0$ while the $(4, 1)$ entry of $t_{A}\widetilde{A}-t_{B}\widetilde{B}$ is $0$ which gives a contradiction.
   
\end{proof}

\begin{proposition}  
    Let $n\geq3$ and $\alpha \in (0,(n-1)/4)$. With each $\alpha$, we associate an interval $I_{n,\alpha}$ (depending on $n, \alpha)$ as follows:
        \[I_{n, \alpha}=\begin{cases}
        \left[\frac{2n-1-\sqrt{1-4 \alpha }}{2 n(n-1)},\;\frac{n+1-\sqrt{(n-1) (n+1-4 \alpha)}}{2 n}\:\right], & \textup{if  }\; 0 < \alpha \leq \frac{n^2-1}{4n^2},\\[10pt]
        \left[\frac{n+1}{2n}-\frac{\sqrt{(n^2-n-1) (n+1-4 \alpha)}}{2n\sqrt{n-1}},\;\frac{n+1-\sqrt{(n-1) (n+1-4 \alpha)}}{2 n}\:\right], & \textup{if  }\; \frac{n^2-1}{4n^2} < \alpha <\frac{n-1}{4}.\\
    \end{cases}\]
    For $x\in I_{n, \alpha}$ we define $z=z(n,\alpha,x)$ such that 
    \[z=\dfrac{2n+1-2(n+1)x}{2 \left(n^2-n-1\right)}\\-\dfrac{\sqrt{-4 n^3 x^2+4 n^3 x+4 n^2 x^2-4 n x-4 \alpha  (n^2-n-1)-3 n^2+n+2}}{2 \left(n^2-n-1\right)\sqrt{n^2-3 n+2}}\;.\]
    Then the $n \times n$ bistochastic matrix $A=A(x)$ such that
     \[A=  \begin{pmatrix}
     x   & z & z & \ldots & z &  *   \\
     z & x & z & \ldots  &z & * \\
     z& z& x & \ldots & z & *\\
     \vdots & \vdots & \vdots & \ddots & \vdots & \vdots\\ 
     z &z &z & \ldots & x&  *\\
     * &*&* & \ldots &*& *
     \end{pmatrix}\]
     is an $\alpha$-Erd\H{o}s matrix for each $x \in I_{n,\alpha}$. 
\end{proposition}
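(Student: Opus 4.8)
The plan is to follow the strategy of Proposition~\ref{prop:AlphaErdos3}: show directly that for each $x \in I_{n,\alpha}$ the displayed matrix $A = A(x)$ is a genuine bistochastic matrix with $\|A\|_F^2 = \mathrm{Trace}(A) - \alpha$ and $\mathrm{maxtrace}(A) = \mathrm{Trace}(A)$, whence $\Delta_n(A) = \mathrm{maxtrace}(A) - \|A\|_F^2 = \alpha$, and then observe that as $x$ ranges over the interior of $I_{n,\alpha}$ (which is nonempty) one obtains uncountably many such matrices because the $(1,1)$-entry of $A(x)$ equals $x$. To set up, write $w := 1 - x - (n-2)z$ for the common value of the entries $A_{i,n} = A_{n,i}$ with $1 \le i \le n-1$; bistochasticity of the first $n-1$ rows and columns forces the corner entry to be $A_{n,n} = d := 1 - (n-1)w = (n-1)x + (n-1)(n-2)z - (n-2)$. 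Thus $A$ is symmetric, has all row and column sums equal to $1$, is completely determined by the pair $(x,z)$, and belongs to $\Omega_n$ precisely when $x, z, w, d \ge 0$.

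Next I would establish the Frobenius identity. A short computation gives $\mathrm{Trace}(A) = (n-1)x + d$ and $\|A\|_F^2 = (n-1)x^2 + d^2 + (n-1)(n-2)z^2 + 2(n-1)w^2$. Substituting the above formulas for $w$ and $d$ and collecting powers of $z$, the equation $\|A\|_F^2 = \mathrm{Trace}(A) - \alpha$ becomes, after dividing through by the positive factor $(n-1)(n-2)$, a quadratic $(n^2 - n - 1)z^2 + \beta(x)z + \gamma(x,\alpha) = 0$ (for $n=3$ this is exactly~\eqref{eqn:Quadratic}). One then verifies that the displayed expression for $z = z(n,\alpha,x)$ is precisely the smaller root of this quadratic; consequently $A(x)$ satisfies $\|A\|_F^2 = \mathrm{Trace}(A) - \alpha$ by construction, which is the analogue of Step~I in the proof of Proposition~\ref{prop:AlphaErdos3}.

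The membership $A(x) \in \Omega_n$ for $x \in I_{n,\alpha}$ amounts to checking $x, z, w, d \ge 0$, and this is exactly what the two endpoints of $I_{n,\alpha}$ encode: requiring both the quantity under the inner square root in the formula for $z$ and $z$ itself to be nonnegative clears (upon squaring) to a quadratic inequality in $x$ whose relevant root gives the left endpoint, the conditions $w \ge 0$ and $d \ge 0$ (equivalently $0 \le w \le 1/(n-1)$) give the right endpoint, and $x \ge 0$ is immediate; a separate elementary estimate shows $I_{n,\alpha}$ has nonempty interior. It then remains to prove $\mathrm{maxtrace}(A) = \mathrm{Trace}(A)$, and here the clean point is that $A$ is invariant under conjugation by any permutation of the first $n-1$ coordinates, so $\sum_i A_{i,\sigma(i)}$ depends only on the combinatorial type of $\sigma$ relative to the partition $\{1,\dots,n-1\}\sqcup\{n\}$: it equals $d + a x + (n-1-a)z$ when $\sigma(n)=n$, where $a \in \{0,1,\dots,n-3,n-1\}$ is the number of fixed points of $\sigma$ in $\{1,\dots,n-1\}$, and $2w + b x + (n-2-b)z$ when $\sigma(n)\ne n$, where $b \in \{0,1,\dots,n-2\}$. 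Since $\mathrm{Trace}(A)$ is the first family at $a = n-1$, it suffices to verify two inequalities: $x \ge z$, which (taking the extremal admissible fixed-point counts) makes $\mathrm{Trace}(A)$ dominate both families down to the single competitor $2w + (n-2)x$; and $\mathrm{Trace}(A) \ge 2w + (n-2)x$, i.e.\ $(n+2)x + (n-2)(n+1)z \ge n$. Both follow from the explicit formulas for $x$ and $z$ on $I_{n,\alpha}$; for $n = 3$ they specialize to $x \ge z$ and $5x+4z \ge 3$, and the third inequality $2x+z \ge 1$ appearing in the proof of Proposition~\ref{prop:AlphaErdos3} is then automatic since $6x+3z = (5x+4z)+(x-z)$.

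I expect the main obstacle to be bookkeeping rather than ideas: expanding $\|A\|_F^2 - \mathrm{Trace}(A) + \alpha$, identifying $\beta(x)$ and $\gamma(x,\alpha)$, and matching the square-root formula are all routine but heavy, as is the verification that the sign constraints cut out exactly $I_{n,\alpha}$; the part requiring the most genuine care is the enumeration of permutation types in the maxtrace reduction and the proof of the two resulting inequalities with their $n$-dependent coefficients.
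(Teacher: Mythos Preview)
Your proposal is correct and follows exactly the approach the paper outlines: the paper in fact skips the proof, saying only that $z$ is chosen to solve the quadratic $\|A\|_F^2=\mathrm{Trace}(A)-\alpha$, that the interval $I_{n,\alpha}$ is engineered so that the discriminant is nonnegative and the remaining sign constraints and the maxtrace condition hold, and that the symmetry of $A$ reduces the maxtrace check to $O(n)$ inequalities rather than $n!$. Your write-up fills in precisely this verification, and your reduction of the maxtrace comparison to just the two inequalities $x\ge z$ and $(n+2)x+(n-2)(n+1)z\ge n$ (by first using $x\ge z$ to identify the extremal competitor in each of the two permutation families) is a modest sharpening of the paper's $O(n)$ claim.
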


We skip the proof of this proposition which is a verification in the spirit of the proof of Proposition~\ref{prop:AlphaErdos3}. However, we point out that $z$ is actually chosen as a solution of the quadratic $\|A\|_{F}^2=\mathrm{Trace}(A)-\alpha$. So this condition is always satisfied. We then choose a region for $x$ (depending on $\alpha$) so that the discriminant of the quadratic is nonnegative and hence $z$ is a nonnegative real. We make no attempt to optimize for this region. Finding the above intervals is made non-trivial because of further conditions like $z\in [0, 1-x]$ and $\mathrm{maxtrace}(A)=\mathrm{Trace}(A)$. With our choice of intervals, these constraints can be verified by some tedious computation. Our choice of $n\times n$ bistochastic matrix is helpful here. We note that with our choice of matrix $A$, the condition $\mathrm{maxtrace}(A)=\mathrm{Trace}(A)$ amounts to verifying $O(n)$ many inequalities as opposed to $n!$ many. 


\section{Erd\H{o}s matrices in dimension \texorpdfstring{$n=4$}{n=4}}
\label{sec:Erdos4}

In this section, we describe all $4\times 4$ Erd\H{o}s matrices. In the previous section, we outlined the technique used by~\cite{bouthat2024question} for determining all $3 \times 3$ Erd\H{o}s matrices up to equivalence. Recall that two bistochastic matrices $A$ and $B$ are equivalent, $A \sim B$, if there exist permutation matrices $P$ and $Q$ such that $B = PAQ$. Applying this technique directly to the $4 \times 4$ case would require working with an $8$-dimensional submanifold of the Birkhoff polytope. We could adapt this method to find infinitely many $\alpha$-Erd\H{o}s matrices because these matrices are abundant, and we could restrict ourselves to a smaller submanifold. However, Erd\H{o}s matrices are fairly rare, and our objective here is to obtain \emph{all} Erd\H{o}s matrices up to equivalence. Therefore, simplifications such as those made in the previous section are not possible. This makes the approach impractical even for dimension $n=4$. Instead, we follow an algorithm outlined in~\cite{tripathi2025some}*{Section 4}, which reduces the problem of finding Erd\H{o}s matrices to a finite number of computations. While this technique also has limitations and cannot be used for dimensions, $n \geq 5$; with sufficient effort, it allows us to obtain a complete classification of $4 \times 4$ Erd\H{o}s matrices. Our main result is the following:
\begin{theorem}
\label{thm:41Erdos}
There are exactly $41$ (up to the equivalence) $4 \times 4$ Erd\H{o}s matrices. Further, if we identify matrices up to transposition (i.e., consider $A \sim A^T$), there are $32$ equivalence classes. 
\end{theorem}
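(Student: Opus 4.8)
The plan is to follow the algorithmic framework from~\cite{tripathi2025some} that reduces the search for Erd\H{o}s matrices to a finite computation. Recall from the introduction that every Erd\H{o}s matrix $A$ can be written as a convex combination $A = \sum_k c_k P_k$ of \emph{linearly independent} permutation matrices, and that the convex hull of any fixed collection of linearly independent permutation matrices contains at most one Erd\H{o}s matrix. The $4\times 4$ permutation matrices number $24$, living in the $(3\cdot 3)=9$-dimensional affine hull of $\Omega_4$, so a linearly independent collection has size at most $10$ (in fact at most $1 + 9 = 10$, but the independence of the associated affine-dependence-free set caps the relevant subsets). First I would enumerate, up to the equivalence $A \sim PAQ$ (and exploiting that this equivalence acts on subsets of $\mathfrak{S}_4$), all subsets $\mathcal{P} \subseteq \mathfrak{S}_4$ of linearly independent permutation matrices; by the preceding remark one may restrict to subsets that are \emph{minimal} carriers, i.e.\ such that the candidate Erd\H{o}s matrix lies in the relative interior of $\mathrm{conv}(\mathcal{P})$.

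For each such subset $\mathcal{P} = \{P_1,\dots,P_m\}$, the next step is to solve for the unique candidate. Writing $A = \sum_{k=1}^m c_k P_k$ with $\sum_k c_k = 1$ and $c_k > 0$, the Erd\H{o}s condition $\Delta_4(A) = 0$ together with the structure that $\mathrm{maxtrace}(A)$ is attained (after applying a suitable permutation equivalence, one may normalize so that $\mathrm{maxtrace}(A) = \mathrm{Trace}(A)$ and moreover that this is witnessed by the identity, i.e.\ $\mathrm{Trace}(A) \ge \mathrm{Trace}(AP_\sigma)$ for all $\sigma \in S_4$) yields a polynomial system in the $c_k$. Concretely, $\mathrm{Trace}(A) = \|A\|_{\mathrm F}^2$ is one quadratic equation, and the maximal-trace normalization imposes the linear inequalities $\mathrm{Trace}(A(I - P_\sigma)) \ge 0$; because the $P_k$ are linearly independent, one shows this system has at most finitely many (indeed, generically one) solutions $c = (c_1,\dots,c_m)$, which one computes exactly. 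Each solution is a rational point by the rationality result of~\cite{tripathi2025some}, so this is exact symbolic computation, not numerics.

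Having produced a finite list of candidate matrices, I would then (i) discard those not actually bistochastic or with $c_k \le 0$ for some $k$; (ii) verify $\Delta_4(A) = 0$ directly by computing $\mathrm{maxtrace}(A)$ over all $24$ permutations and comparing with $\|A\|_{\mathrm F}^2$; and (iii) reduce the survivors modulo the equivalence $A \sim PAQ$ by computing a canonical form for each (for instance, the lexicographically least matrix in the orbit under the $24\times 24$ action of $\mathfrak{S}_4 \times \mathfrak{S}_4$). This yields the count $41$. Finally, to get the coarser count, I would further quotient by the involution $A \mapsto A^T$: among the $41$ equivalence classes, identify which are self-transpose (fixed points) and which pair up, obtaining $32 = (41 + 23)/2$, i.e.\ $23$ self-transpose classes and $9$ transpose-pairs. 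The book-keeping here is a finite but genuinely large computation, best organized and double-checked by computer algebra (as the paper's Mathematica setup suggests).

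The main obstacle I anticipate is \textbf{completeness of the enumeration}: one must be certain that every linearly independent carrier set has been considered up to equivalence, and that for each the finiteness-of-solutions argument is airtight (in particular handling the degenerate cases where the quadratic $\mathrm{Trace}(A) = \|A\|_{\mathrm F}^2$ is identically satisfied on a positive-dimensional face, where one must invoke the ``at most one Erd\H{o}s matrix per linearly independent family'' lemma from~\cite{tripathi2025some} to still extract finitely many candidates). A secondary difficulty is the sheer combinatorial volume---the number of subsets of $\mathfrak{S}_4$ is $2^{24}$ before symmetry reduction---so an efficient orbit enumeration (pruning non-independent sets early, and using the $\mathfrak{S}_4\times\mathfrak{S}_4$ action to cut the search by a factor close to $576$) is essential to make the computation feasible and, more importantly, \emph{auditable}. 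Once the enumeration is complete and each candidate is verified by the direct $\Delta_4$ computation, correctness of the final counts $41$ and $32$ follows.
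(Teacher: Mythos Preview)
Your overall framework matches the paper's: enumerate linearly independent subsets of $\mathfrak{S}_4$ (of size at most $10$), extract the unique Erd\H{o}s candidate from each, verify directly, and reduce modulo the $\mathfrak{S}_4\times\mathfrak{S}_4$ action. However, your candidate-extraction step has a genuine gap. You propose to solve the single quadratic $\mathrm{Trace}(A)=\|A\|_{\mathrm F}^2$ in the coefficients $c_1,\dots,c_m$, subject to $\sum_k c_k=1$ and the maxtrace inequalities, and you claim this system has ``generically one'' solution. But one quadric in $m-1$ free parameters cuts out an $(m-2)$-dimensional set, so for every subset of size $m\ge 3$ your solution locus is positive-dimensional; the linear inequalities only carve out a region, not a point. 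The uniqueness lemma from~\cite{tripathi2025some} tells you at most one point in that region is Erd\H{o}s, but gives you no mechanism to \emph{locate} it, so as written your algorithm does not terminate.

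The paper closes this gap by invoking the full content of~\cite{tripathi2025some}*{Proposition~1.5}: if $A=\sum_k c_k P_k$ is Erd\H{o}s with minimal carrier $\{P_1,\dots,P_m\}$, then \emph{every} $P_i$ (not just the identity) achieves the maxtrace, so $\langle A,P_i\rangle=\|A\|_{\mathrm F}^2$ for all $i$. Writing $M_{ij}=\langle P_i,P_j\rangle_{\mathrm F}$, this forces $M\mathbf{c}$ to be a constant vector, whence the explicit closed form $\mathbf{c}=M^{-1}\mathbbm{1}/\langle\mathbbm{1},M^{-1}\mathbbm{1}\rangle$. Candidate extraction is thus a single \emph{linear} solve per subset, which is what makes the enumeration (over about $1.7$ million subsets containing the identity) tractable and exact. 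With this correction in place, the remainder of your plan---the positivity filter, direct verification of $\Delta_4(A)=0$ against all $24$ permutations, and orbit reduction to obtain $41$ and then $32$---is precisely what the paper carries out.
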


The list of these $41$ matrices is provided in Appendix~\ref{append:Erdos4x4}. For completeness, we describe the approach used to obtain them. To this end, the following result is crucial for our purposes.

\begin{theorem}[\cite{tripathi2025some}*{Proposition 1.5}]
    Let $\{P_1, \ldots, P_m\}\subseteq \Pcal_n$ be a linearly independent collection of permutation matrices. Let $M\in R^{m\times m}$ be the positive definite matrix such that $M_{i, j}=\langle P_i, P_j\rangle_{\operatorname{F}}$. Set
    \[ {\bf x} = \frac{M^{-1}\mathbbm{1}}{\langle \mathbbm{1}, M^{-1}\mathbbm{1}\rangle}\;,\]
    where $\mathbbm{1}$ is a vector of all $1$s. 
    If $x_i>0$ for all $i\in [m]$, then $\sum_{i=1}^{m}x_iP_i$ is a bistochastic matrix and every Erd\H{o}s matrix is of this form. 
\end{theorem}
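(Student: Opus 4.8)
The plan is to prove the two assertions separately: first the easy direction that the prescribed $\mathbf{x}$ always yields a bistochastic matrix when its entries are positive, and then the substantive direction that every Erd\H{o}s matrix arises this way. For the first assertion the only thing to verify is that $\sum_{i=1}^m x_i = 1$; granting this, $\sum_i x_i P_i$ is a convex combination of permutation matrices and hence lies in $\Omega_n$. But since $M$ is positive definite we have $\langle \mathbbm{1}, M^{-1}\mathbbm{1}\rangle>0$, so $\sum_i x_i = \langle \mathbbm{1},\mathbf{x}\rangle = \langle \mathbbm{1}, M^{-1}\mathbbm{1}\rangle/\langle \mathbbm{1}, M^{-1}\mathbbm{1}\rangle = 1$.

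For the main direction, let $A$ be an Erd\H{o}s matrix and set $\mu = \mathrm{maxtrace}(A)$. I would start from the structural fact recalled in the introduction (from \cite{tripathi2025some}) that $A$ admits a representation $A = \sum_{i=1}^m x_i P_i$ as a convex combination of \emph{linearly independent} permutation matrices $P_1,\dots,P_m$, with all $x_i>0$ and $\sum_i x_i = 1$. (If one prefers a self-contained route, this follows from the Birkhoff--von Neumann decomposition by iteratively using any linear dependence among the permutation matrices in the support to cancel one of them while keeping all coefficients nonnegative — the standard ``reduce the support'' move — noting that the coefficients of such a dependence necessarily sum to $0$, since applying the dependence to the all-ones vector in $\R^n$ forces this.) The key step is the equality analysis of the Marcus--Ree inequality~\eqref{eqn:MarcusRee} applied to this particular decomposition: writing $\pi_i$ for the permutation of $P_i$, we have $\langle A, P_i\rangle = \sum_j A_{j,\pi_i(j)} \le \mu$ for each $i$, hence
\[ \|A\|_{F}^{2} = \langle A, A\rangle = \sum_{i=1}^m x_i\,\langle A, P_i\rangle \le \sum_{i=1}^m x_i\,\mu = \mu\;, \]
and $\Delta_n(A) = 0$ forces equality throughout; since every $x_i > 0$, this yields $\langle A, P_i\rangle = \mu$ for all $i\in[m]$.

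From here it is linear algebra. The $m$ equalities $\langle A, P_i\rangle = \mu$, rewritten via the decomposition, read $\sum_j x_j\langle P_i,P_j\rangle = \mu$, i.e.\ $M\mathbf{x} = \mu\,\mathbbm{1}$. Since $P_1,\dots,P_m$ are linearly independent, $M$ is positive definite, hence invertible, so $\mathbf{x} = \mu\,M^{-1}\mathbbm{1}$. Pairing with $\mathbbm{1}$ and using $\sum_i x_i = 1$ gives $1 = \mu\,\langle\mathbbm{1}, M^{-1}\mathbbm{1}\rangle$, whence $\mu = 1/\langle\mathbbm{1}, M^{-1}\mathbbm{1}\rangle$ and $\mathbf{x} = M^{-1}\mathbbm{1}/\langle\mathbbm{1}, M^{-1}\mathbbm{1}\rangle$, which is exactly the asserted form, with $x_i>0$ as required.

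The genuinely substantive input is the existence of a linearly independent convex representation of $A$ (equivalently, the support-reduction argument); once that is in hand, the Marcus--Ree equality analysis and the ensuing linear algebra are routine. A point to be careful about is that the equality analysis must be run on the linearly independent decomposition itself rather than on an arbitrary Birkhoff decomposition, since a given Erd\H{o}s matrix — for instance $J_n$ — may also admit Birkhoff decompositions whose support is linearly dependent. I would also flag that the theorem does not assert the converse (that $\sum_i x_i P_i$ with $x_i>0$ is necessarily Erd\H{o}s), which is why the statement functions as a finite generate-and-test scheme rather than a full characterization.
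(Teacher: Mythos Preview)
Your argument is correct, but there is nothing in this paper to compare it against: the statement is quoted verbatim as \cite{tripathi2025some}*{Proposition~1.5} and is used here without proof, serving only to justify Algorithm~\ref{alg:4x4Erdos}. Your proof --- support-reduce a Birkhoff decomposition to a linearly independent one, run the equality case of~\eqref{eqn:MarcusRee} on that decomposition to get $\langle A,P_i\rangle=\mu$ for each $i$, and then solve the resulting linear system $M\mathbf{x}=\mu\mathbbm{1}$ --- is exactly the argument one expects (and is the one given in the cited reference), and all steps are sound.
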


This yields the following algorithm for enumerating all $4 \times 4$ Erd\H{o}s matrices, as outlined in ~\cite{tripathi2025some}*{Section 4}.

\begin{algorithm}
\caption{Computing all $4 \times 4$ Erd\H{o}s matrices}
\label{alg:4x4Erdos}
\begin{algorithmic}[1]
    \State Enumerate all linearly independent subsets of $4\times 4$ permutation matrices.
    \State Solve for $M{\bf y}=\mathbbm{1}$.
    \State Discard all subsets for which $y_i\leq 0$ for some $i$.
    \State Set ${\bf x}={\bf y}/\langle {\bf y}, \mathbbm{1}\rangle$ and compute $A = \sum_{i=1}^{m}x_iP_i$.
    \State Verify whether $A$ is an Erd\H{o}s matrix.
\end{algorithmic}
\end{algorithm}

We implemented this algorithm in Mathematica\footnote{The complete code is available at \url{https://github.com/amankoir/erdos-matrices}}. Apart from obtaining the $4\times 4$ Erd\H{o}s matrices, we also observed several curious phenomena that raise interesting mathematical questions of independent interest. Therefore, we summarize our observations in Table~\ref{table:1} and discuss these questions at the end. 

As the dimension of the convex hull of all $n \times n$ permutation matrices, $\Omega_n$, is $(n-1)^2$, it follows from Carath\'eodory's theorem that any linearly independent collection of $n \times n$ permutation matrices can have at most $(n-1)^2+1$ elements. Therefore, it is sufficient to consider subsets containing at most $(4-1)^2+1=10$ permutation matrices of order $4 \times 4$ to obtain all $4 \times 4$ Erd\H{o}s matrices. We now introduce and explain the notations in our table.

\begin{itemize}
    \item For each $k\in \{1, \ldots, 10 \}$, let $\mathcal{P}_k$ denote the family of all subsets of $4 \times 4$ permutation matrices of size $k$ containing identity. We only consider the subsets containing the identity since we are interested in characterizing the Erd\H{o}s matrices only up to equivalence. In particular, $|\mathcal{P}_k|=\binom{23}{k-1}$.
    \item We denote by $\mathcal{I}_k\subseteq \mathcal{P}_k$ the set of all linearly independent subsets of size $k$.
    \item For a subset $X=\{P_1, \ldots, P_k\}\in \mathcal{I}_{k}$, let $M_{X}$ to be the $k\times k$ Gram matrix with $M_{X}(i, j)=\langle P_i, P_j\rangle_{F}$. Note that the definition of $M_{X}$ depends on the choice of the ordering of the elements in the subset $X$. We ignore this by an abuse of notation as the exact order will not be important to us. Let ${\bf y}$ be the unique solution to $M_{X}{\bf y}=\mathbbm{1}$. Let $\mathcal{G}_k$ denote the set of all $X\in \mathcal{I}_{k}$ such that ${\bf y}$ satisfies ${y}_i>0$ for all $i\in [k]$. Note that because of this last condition the choice of ordering in the set $X$ does not matter.  Let ${\bf x}={\bf y}/\langle \mathbbm{1}, {\bf y}\rangle$ and define a bistochastic matrix $A_{X}=\sum_{i=1}^{k}x_iP_i$.
    \item At this stage, we are left with a few thousand matrices that are candidates for being Erd\H{o}s matrices. We then check each candidate to determine whether it satisfies the defining condition. Let $\mathcal{E}_{k}$ denote the set of all those subsets $X\in \Gcal_k$ for which $A_{X}$ is an Erd\H{o}s matrix. 
    \item Many of these subsets may give the same/equivalent Erd\H{o}s matrices. Let $\mathcal{H}_k$ denote the set of non-equivalent Erd\H{o}s matrices $A_{X}$ as $X$ runs in $\Ecal_k$.
\end{itemize}

Finally, $\bigcup_{k=1}^{10} \mathcal{H}_k$ might still have equivalent Erd\H{o}s matrices therefore we check them for equivalence. This yields $\left| \bigcup_{k=1}^{10} \mathcal{H}_k \big/\sim\right| = 41$, which proves the first part of Theorem~\ref{thm:41Erdos}. The second part follows by identifying pairs $(A,B)$ of non-equivalent Erd\H{o}s matrices that satisfy $A =PB^TQ$ for some permutation matrices $P$ and $Q$.

\begin{table}
\centering
\begin{tabular}{|l |l |l |l| l| l|}
\hline
$k$ & $\binom{23}{k-1}$ & $|\mathcal{I}_{k}|$ & $|\mathcal{G}_k|$ & \rule{0pt}{12pt} $|\mathcal{E}_k|$&$|\mathcal{H}_k|$\\[0.2em]
\hline
$1$ &$1$ &$1$ &$1$ &$1$& $1$ \\
$2$ &$23$ &$23$ &$23$&$23$ & $4$\\
$3$ &$253$ &$253$ &$244$&$172$& $6$\\
$4$ &$1771$ &$1768$ &$1400$&$580$& $14$\\
$5$ &$8855$ &$8780$ &$5275$ &$1040$& $11$\\
$6$ &$33649$ &$32736$ &$11652$ &$1086$ &$14$\\
$7$ &$100947$ &$93765$ &$17059$&$1589$& $6$\\
$8$ &$245157$ &$204688$ &$14456$&$416$& $3$\\
$9$ &$490314$ &$320304$ &$5286$&$300$& $1$\\
$10$ &$817190$ &$287180$&$30$& $30$&$1$\\
\hline
\end{tabular}
\caption{Intermediate results from computation of Erd\H{o}s matrices}
\label{table:1}
\end{table}

\begin{remark}
Note that the first step of Algorithm~\ref{alg:4x4Erdos} requires identifying all linearly independent subsets of $n \times n$ permutation matrices, which involves examining all subsets up to size $(n-1)^2+1$. While this is computationally manageable in dimension $n=4$, the number of such subsets becomes prohibitively large in higher dimensions, making this approach infeasible. 
\end{remark}

\subsection{Some observations and questions}
\label{sec:Observations_and_Conjectures}

\subsubsection*{Linear independence of a random subset of permutation matrices}
We refer the reader to Table~\ref{table:1}. This table reveals that a high fraction of the subsets of $4\times 4$ permutation matrices (of size $\leq 10$) are linearly independent. This inspired us to look further into the linear independence of random subsets of $n\times n$ permutation matrices. Of course, any subset of size bigger than $(n-1)^2+1$ is linearly dependent. Therefore, we focus our attention to a random subset of size $d_{n}\coloneqq (n-1)^2+1$. There are $\binom{n!}{d_n}\sim e^{n^3\log n}$ subsets of $n\times n$ permutation matrices of size $d_n$. Therefore, checking the linear (in)dependence of all such subsets is computationally infeasible even in dimension $n=5$. Therefore, we propose to study the following probabilistic variant. What is the probability that a randomly chosen subset of permutation matrices of size $d_n$ is linearly independent? For each $4\leq n\leq 16$, we generate $n^5$ subsets of permutation matrices of size $d_n$ and count how many of these subsets are linearly dependent. 
We refer the reader to Algorithm~\ref{alg:LinearDependence} for the details of the implementation and Table~\ref{table:2} for the results of our simulation. In Table~\ref{table:2}, $d$ denotes the number of subsets (out of $ n^5 $) that are linearly dependent. The empirical probability of a subset being linearly dependent, given by $d/n^5$, is also recorded in Table~\ref{table:2}.  

\begin{algorithm}
\caption{Linear dependence of a random subset of permutations}
\label{alg:LinearDependence}
\begin{algorithmic}
\Require $ n \geq 4 $.
\State Define $ k := (n-1)^2 + 1 $, $ N_{\text{iter}} := n^5 $.
\State Initialize $ d \gets 0 $, $ i \gets 0 $.
\While{$d + i \leq N_{\text{iter}}$}
    \State Generate $n \times n$ permutation matrices $P_1, P_2, \dots, P_k$, chosen uniformly at random.
    \State Vectorize $P_i \in \R^{n\times n}\to P_i \in \R^{n^2}$ for each $i$.
    \State Set $\mathcal{S} = \bigcup_{i=1}^{k}\{P_i\}$ . \Comment{This removes duplicate permutations}
    \If{$|\mathcal{S}|=k$}
        \State Identify $\mathcal{S}=\{P_1,P_2,\ldots,P_k\}$ with $\big[ P_1 \; P_2 \; \dots \; P_k \big]^T \in \mathbb{R}^{k \times n^2} $
        \If{rank $\mathcal{S}^{T}<k$}
            \State $d \gets d + 1$
        \Else
            \State $i \gets i + 1$
        \EndIf
    \EndIf
\EndWhile
\State \textbf{return} $d$
\end{algorithmic}
\end{algorithm}

\begin{table}
\centering
\captionsetup{justification=centering}
\begin{tabular}{|l |l |l |l |} 
&&&\\[-1.2em]
\hline\\[-0.9em]
$n$ & $n^5$ & $d$ &$d/n^5$\\[2pt]
&&&\\[-1.2em] 
\hline \\[-0.9em]
5&3125&2416&$0.77312$\\
6&7776&4459&$0.573431$\\
7&16807&3735&$0.222229$\\
8&32768&2794&$0.0852661$\\
9&59049&2229&$0.0377483$\\
10&100000&1775&$0.01775$\\
11&161051&1363&$8.46315\times10^{-3}$\\ 
12&248832&895&$3.5968\times10^{-3}$\\
13&371293&594&$1.59981\times10^{-3}$\\
14&537824&340&$6.32177\times10^{-4}$\\
15&759375&219&$2.88395\times10^{-4}$\\
16&1048576&122&$1.16348\times10^{-4}$\\  
\hline
\end{tabular}
\caption{Empirical probabilities of random subsets of permutations being linearly dependent}
\label{table:2}
\end{table}

Based on the results of Table~\ref{table:2}, we make the following conjecture.
\begin{conjecture}
\label{conj:LinearIndependence}
    Let $n\geq 4$. Let $\mathcal{S}_n$ be a random subset of $n\times n$ permutation matrices of size $d_n$, then $\mathcal{S}_n$ is linearly independent with high probability. More precisely, 
    \[\mathbb{P}\round{\mathcal{S}_n\text{ is linearly independent}}= 1-o_n(1)\;.\]
\end{conjecture}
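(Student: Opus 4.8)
We outline a plausible line of attack, together with the point where the real difficulty lies. The plan is to reduce the conjecture to a statement about how many permutation matrices can lie on a single hyperplane. First replace $\Scal_n$ by a sequence $P_{\sigma_1},\dots,P_{\sigma_{d_n}}$ of i.i.d.\ uniformly random permutation matrices; since the probability of a repetition is at most $\binom{d_n}{2}/n!=O(n^4/n!)=o(1)$, the two models agree up to $o(1)$, and it suffices to bound $\mathbb{P}(P_{\sigma_1},\dots,P_{\sigma_{d_n}}\text{ linearly dependent})$. Let $W=\operatorname{span}\Pcal_n\subseteq\R^{n\times n}$; classically $\dim W=(n-1)^2+1=d_n$. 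Because $d_n$ vectors in a $d_n$-dimensional space are dependent if and only if they all lie in a common hyperplane of $W$, we get
\[
\mathbb{P}(\text{dependent})=\mathbb{P}\bigl(\exists\text{ hyperplane }H\subset W:\ P_{\sigma_i}\in H\ \forall i\bigr),
\]
and for a \emph{fixed} hyperplane $H$ one has $\mathbb{P}(P_{\sigma_i}\in H\ \forall i)=(|\Pcal_n\cap H|/n!)^{d_n}$. So everything hinges on how many permutation matrices a hyperplane of $W$ can contain, and on how many ``dense'' such hyperplanes there are.

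The structural input I would aim to establish is the following. Writing $H=\{M\in W:\langle M,\phi\rangle=0\}$, the map $\sigma\mapsto\langle P_\sigma,\phi\rangle=\sum_a\phi_{a,\sigma(a)}$ ranges, as $\phi$ varies, over exactly the space of ``degree $\le 1$'' functions on $S_n$ (the isotypic components of the trivial and standard representations), of dimension $d_n$. Claim: a nonzero such function vanishes on at most a $(1-1/n)$-fraction of $S_n$, with equality precisely for a single matrix coordinate $\sigma\mapsto(P_\sigma)_{ab}$, i.e.\ for the $n^2$ coordinate hyperplanes $\{M\in W:M_{ab}=0\}$, each containing exactly $n!-(n-1)!$ permutation matrices. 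One also wants the attached \emph{stability} statement: any hyperplane missing only $O((n-1)!)$ permutation matrices is structurally close to a coordinate one, and there are only $n^{O(1)}$ of these. This is a statement about low-degree functions on $S_n$, for which the Fourier analysis of $S_n$ in the spirit of Ellis--Friedgut--Pilpel and its refinements should be the right tool. Granting even the non-stable version, $|\Pcal_n\cap H|\le(1-1/n)\,n!$ for \emph{every} hyperplane of $W$, so $\mathbb{P}(P_{\sigma_i}\in H\ \forall i)\le(1-1/n)^{d_n}=\exp(-n+O(1))$.

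With this in hand, the $n^2$ coordinate hyperplanes contribute at most $n^2(1-1/n)^{d_n}=n^2\exp(-n+O(1))=o(1)$ by a union bound; this is the dominant term and it matches the empirical decay $d/n^5\approx n^2e^{-n}$ in Table~\ref{table:2}. For every other hyperplane one discretizes: if the $P_{\sigma_i}$ are dependent then their span has some dimension $k\le d_n-1$ and is the span of $k$ of them, so by symmetry
\[
\mathbb{P}(\text{dependent})\le\sum_{k=1}^{d_n-1}\binom{d_n}{k}\,\mathbb{E}\Bigl[\bigl(|\Pcal_n\cap\operatorname{span}(P_{\sigma_1},\dots,P_{\sigma_k})|/n!\bigr)^{d_n-k}\Bigr],
\]
and the point is to bound the inner expectation, via the stability lemma, strongly enough to absorb the factors $\binom{d_n}{k}$ (which are as large as $\sim n^2$ when $k=d_n-1$).

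The hard part is precisely this last step, and it is why the statement is only conjectured. The non-stable lemma gives each hyperplane an avoidance probability $\exp(-\Theta(n))$, but the number of hyperplanes of $W$ arising as the span of a sub-collection of random permutation matrices can be $\exp(\Theta(n^2\log n))$, so a bare union bound fails by an exponential margin. One genuinely needs the stability version --- to argue that only hyperplanes very near the $n^2$ coordinate ones carry non-negligible weight --- together with a second-moment or direct count of how many near-coordinate hyperplanes actually occur as such spans; this is the same tension between the density of a hyperplane and the number of hyperplanes of that density that makes sharp Littlewood--Offord and Erd\H{o}s--Ko--Rado stability arguments delicate. An alternative avoiding any enumeration of hyperplanes is a Rudelson--Vershynin-type dichotomy on the coefficient vector $c$ of a hypothetical dependence $\sum_i c_iP_{\sigma_i}=0$: rule out \emph{sparse} $c$ using that the permutation matroid has girth $4$ (a short direct computation) and only polynomially many circuits of each bounded size, and rule out \emph{spread} $c$ via an anti-concentration bound $\mathbb{P}\bigl(\sum_i c_iP_{\sigma_i}=0\bigr)\le(\text{cardinality of an }\eps\text{-net})^{-1}$, exploiting that each matrix-entry relation $\sum_{i:\sigma_i(a)=b}c_i=0$ is a sum of a spread sequence over a random index set --- there the obstacle is the anti-concentration estimate, which is nonstandard because the entries of a single permutation matrix are correlated and because ``$=0$'' is an exact constraint rather than a small-ball event.
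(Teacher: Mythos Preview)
The statement you are attempting is labelled a \emph{conjecture} in the paper, and the paper does not prove it. The only content the paper offers is numerical evidence (Table~\ref{table:2}), a reformulation in terms of the invertibility of the random Gram matrix $M(i,j)=\mathrm{Trace}(P_iP_j^{T})$, and a remark that techniques around random Bernoulli matrices (Tikhomirov, Tao--Vu, Jain et al.) may be relevant. There is thus no ``paper's proof'' to compare your attempt against.

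Your write-up is candid that it is an outline rather than a proof, and you correctly isolate the genuine obstruction: the hyperplane-density bound $|\Pcal_n\cap H|\le(1-1/n)\,n!$ gives each fixed hyperplane an avoidance probability $e^{-\Theta(n)}$, but the relevant family of hyperplanes is far too large for a union bound, and you would need a strong stability statement (only hyperplanes close to the $n^2$ coordinate ones carry non-negligible mass) together with a way to control how many near-extremal hyperplanes actually arise as spans of random permutation matrices. Neither of those ingredients is presently available in the literature in the form you need, and your alternative Rudelson--Vershynin-style dichotomy runs into a nonstandard anti-concentration problem because the entries of a permutation matrix are strongly dependent and the target event is an exact equality. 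So while the heuristics are sensible (and the predicted main term $n^2(1-1/n)^{d_n}$ does track the numerics), the proposal remains, as you yourself note, a sketch with a substantial gap at the decisive step---which is precisely why the paper records the statement as an open conjecture rather than a theorem.
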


This conjecture can be reformulated in terms of the invertibility probability of a positive semi-definite matrix as follows. Let $\mathcal{S}$ be a random subset of size $d_n$ of $n\times n$ permutation matrices. Let $M=M_{\mathcal{S}}$ denote the positive semi-definite matrix 
\[ M(i, j) =\langle P_i, P_j\rangle_{\operatorname{F}}\;, \quad i, j\in [d_n]\;.\]
Conjecture~\ref{conj:LinearIndependence} is equivalent to showing that $\mathbb{P}(\det(M_{\mathcal{S}})\neq 0)=1-o_n(1)$. This is reminiscent of a famous problem in random matrix theory~\cites{tikhomirov2020singularity,tao2007singularity} regarding the invertibility of a random Bernoulli matrix, that is, a random matrix with i.i.d. $0-1$ entries. We should point out that Conjecture~\ref{conj:LinearIndependence} differs from the invertibility of random Bernoulli matrix in that the entries of the matrix $M$ are highly correlated. The invertibility of a random Bernoulli matrix with dependent entries has been recently studied in~\cite{jain2020sharp}. While the setting in~\cite{jain2020sharp} is quite different, we believe that some of the ideas in~\cite{jain2020sharp} may be useful in our setting.



\subsubsection*{On the positivity of the solution of $M{\bf x}=\mathbbm{1}$}
What we have argued above is that most subsets of permutation matrices (of appropriate sizes) seem to be linearly independent. However, the Erd\H{o}s matrices are rare. Looking at Algorithm~\ref{alg:4x4Erdos} and Table~\ref{table:1}, it seems that there are two important sources for the rarity of Erd\H{o}s matrices. First, we notice that $|\Gcal_k|$ is very small compared to $|I_k|$. In other words, for most linearly independent subsets of permutation matrices, the solution to $M{\bf y}=\mathbbm{1}$ has a non-positive coordinate. For a positive semi-definite matrix $A$, the various conditions for the solution of $A{\bf x}=b$ to have positive coordinates have already been investigated in matrix theory~\cite{pena2001class} and economics literature~\cite{christensen2019comparative}. It is natural to wonder if one can prove a quantitative bound on the probability that $M{\bf x}=\mathbbm{1}$ has positive solution when $M$ is a Gram matrix given by $M=\langle P_i, P_j\rangle_{F}$ where $(P_i)_{i\in [m]}$ are i.i.d. uniform permutation matrices.

\subsubsection*{Number of orbits of subsets of permutations}
We finally bring the reader's attention to another curious phenomenon in Table~\ref{table:1}. Looking at the last two columns in Table~\ref{table:1} shows that Erd\H{o}s matrices typically have large equivalence classes. Recall that we say two bistochastic matrices $A$ and $B$ are equivalent, $A\sim B$, if $A=PBQ$ for some permutation matrices $P, Q$. Given a bistochastic matrix $A$, can we understand the size of the equivalence class of $A$? This leads us to propose the following combinatorics problem that may be of independent interest. 

Let $n\in \mathbb{N}$ and let $k\in [n!]$. Let $\mathcal{P}_{n, k}$ be the family of all subsets of size $k$ of the permutations in $S_n$. That is, 
\[\mathcal{P}_{n, k} \coloneqq \{S\subseteq S_n: |S|=k\}\;.\]
Given $S=\{\sigma_1, \ldots, \sigma_k\}, T=\{\tau_1, \ldots, \tau_k\}\in \mathcal{P}_{n, k}$ we declare $S\sim T$ if there exist permutations $\mu, \nu\in S_n$ such that 
\[T= \mu S\nu\coloneqq \{\mu\sigma_1\nu, \ldots, \mu\sigma_k\nu\}\;.\]
It is easy to check that $\sim$ is an equivalence relation on $\mathcal{P}_{n, k}$. 
\begin{question}
\label{ques:Equivalence}
Determine the number of equivalence classes in $\mathcal{P}_{n, k}$ under $\sim$, That is, determine $f_{n,k}\coloneqq |\mathcal{P}_{n, k}/\sim|$. Does $f_{n,k}$ satisfy some recurrence?
\end{question}
It is easy to note that $f_{n,1}=f_{n,n!}=1$ for any $n$. It is also easy to show that $f_{n,2}=p(n)-1$ for all $n \geq 2$. 

\begin{lemma}
\label{lem:fn2}
    For any $n\geq 2$, we have $f_{n,2}=p(n)-1$ where $p(n)$ is the number of partitions of $n$. 
\end{lemma}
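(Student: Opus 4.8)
The plan is to set up a bijection between the set of equivalence classes in $\mathcal{P}_{n,2}$ and the set of \emph{nontrivial} partitions of $n$ (that is, all partitions except the singleton partition $(n)$, or depending on convention, except the one that would force $\sigma_1 = \sigma_2$). First I would reduce to a canonical form: given $S = \{\sigma_1, \sigma_2\} \in \mathcal{P}_{n,2}$, use the freedom $S \mapsto \mu S \nu$ to normalize. Taking $\mu = \sigma_1^{-1}$ (or $\nu = \sigma_1^{-1}$ on the right), we may assume $\sigma_1 = \mathrm{id}$, so that $S = \{\mathrm{id}, \tau\}$ where $\tau = \sigma_1^{-1}\sigma_2 \neq \mathrm{id}$ (the inequality because $|S| = 2$). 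The remaining stabilizer freedom is conjugation: $\{\mathrm{id}, \tau\} \sim \{\mathrm{id}, \mu\tau\mu^{-1}\}$ for any $\mu \in S_n$ that also fixes $\mathrm{id}$ under the two-sided action (and one should check there is no extra identification swapping the two elements, since $\{\mathrm{id}, \tau\}$ with $\mathrm{id}$ moved to the second slot still has a representative with $\mathrm{id}$ first). Hence equivalence classes in $\mathcal{P}_{n,2}$ correspond to conjugacy classes of non-identity elements of $S_n$.

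The second step is the classical fact that conjugacy classes in $S_n$ are in bijection with partitions of $n$ via cycle type. Removing the identity removes exactly the partition $(1^n)$ consisting of $n$ ones. Therefore $f_{n,2} = p(n) - 1$. I would write this out carefully: define the map $\Phi$ sending the class of $\{\mathrm{id}, \tau\}$ to the cycle type of $\tau$; check it is well-defined (conjugate permutations have the same cycle type, and the normalization is unique up to conjugation), surjective onto partitions $\neq (1^n)$ (any cycle type other than all-fixed is realized by some $\tau \neq \mathrm{id}$), and injective (two non-identity permutations with the same cycle type are conjugate in $S_n$, hence the corresponding pairs are equivalent).

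The one genuine subtlety — and the step I expect to require the most care — is verifying that the two-sided action $S \mapsto \mu S \nu$ really collapses to conjugation once we fix the normalization $\sigma_1 = \mathrm{id}$, including ruling out spurious identifications. Concretely, suppose $\{\mathrm{id}, \tau\} = \mu\{\mathrm{id}, \rho\}\nu$ with $\tau, \rho \neq \mathrm{id}$. Then either $\mu\nu = \mathrm{id}$ and $\mu\rho\nu = \tau$, giving $\tau = \mu\rho\mu^{-1}$; or $\mu\rho\nu = \mathrm{id}$ and $\mu\nu = \tau$, giving $\tau = \mu\nu = \mu\rho^{-1}\mu^{-1} \cdot \mu\rho\nu \cdot (\mu\nu)\cdots$ — one checks in this branch that $\nu = \rho^{-1}\mu^{-1}$ so $\tau = \mu\nu = \mu\rho^{-1}\mu^{-1}$, i.e. $\tau$ is conjugate to $\rho^{-1}$, which has the same cycle type as $\rho$. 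So in both branches $\tau$ and $\rho$ share a cycle type, and conversely conjugate elements clearly give equivalent pairs. This confirms that the equivalence classes biject with partitions $\neq (1^n)$, completing the count $f_{n,2} = p(n) - 1$.
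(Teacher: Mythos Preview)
Your proof is correct and follows essentially the same route as the paper: normalize $\{\sigma_1,\sigma_2\}$ to $\{\mathrm{id},\tau\}$ with $\tau\neq\mathrm{id}$, then identify the residual equivalence with conjugacy in $S_n$ and invoke the cycle-type bijection with partitions. If anything, you are more careful than the paper, which simply asserts that $\{e,\sigma\}\sim\{e,\tau\}$ iff $\sigma$ and $\tau$ are conjugate, whereas you explicitly check the second branch (where the set equality swaps the two elements) and observe that it yields $\tau$ conjugate to $\rho^{-1}$, which has the same cycle type as $\rho$.
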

\begin{proof}
Let $n\geq 2$ and let $S=\{\sigma_1, \sigma_2\}\in \mathcal{P}_{n, 2}$. Clearly $S\sim \{e, \sigma_1^{-1}\sigma_2\}$. It suffices to classify the subsets of the form $\{e, \sigma\}$ up to the equivalence. To this end, note that $S=\{e, \sigma\}, T=\{e, \tau\}\in \mathcal{P}_{n, 2}$ are equivalent if and only if $\sigma$ and $\tau$ are conjugates. As the number of conjugacy classes in $S_n$ is $p(n)$ and $\sigma$ can not be identity, it follows that $f_{n,2}=|\mathcal{P}_{n, 2}/\sim|=p(n)-1$. 
\end{proof}

Furthermore, it can also be observed that the equivalence relation $\sim$ intertwines the bijection $S\mapsto S_n\setminus S$ between $P_{n,k}$ and $P_{n,n!-k}$. This immediately yields the following lemma. 

\begin{lemma}
\label{lem:fnksymmetric}
    For any $n\in \N$, and $k\in [n!]$ we have $f_{n,k}=f_{n,n!-k}$. 
\end{lemma}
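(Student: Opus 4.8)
The plan is to exhibit an explicit bijection between the two quotient sets $\mathcal{P}_{n,k}/\!\sim$ and $\mathcal{P}_{n,n!-k}/\!\sim$ that is induced by set-theoretic complementation inside $S_n$. Define $\Phi\colon \mathcal{P}_{n,k}\to \mathcal{P}_{n,n!-k}$ by $\Phi(S)=S_n\setminus S$. Since $|S|=k$ forces $|S_n\setminus S|=n!-k$, this map is well defined, and it is a bijection with inverse $T\mapsto S_n\setminus T$. So the content of the lemma is entirely in showing that $\Phi$ is compatible with $\sim$.

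The only point requiring verification is that $\Phi$ is equivariant for the two-sided action of $S_n\times S_n$ on subsets of $S_n$, i.e.\ that $\Phi(\mu S\nu)=\mu\,\Phi(S)\,\nu$ for all $\mu,\nu\in S_n$. This holds because the map $x\mapsto \mu x\nu$ is a bijection of $S_n$ onto itself (it is left-translation by $\mu$ followed by right-translation by $\nu$, both invertible); a bijection of a finite set sends the complement of a subset to the complement of its image, so $\mu(S_n\setminus S)\nu=S_n\setminus(\mu S\nu)$, which is exactly the claimed identity.

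Consequently, if $S\sim T$ in $\mathcal{P}_{n,k}$, say $T=\mu S\nu$, then $\Phi(T)=\mu\,\Phi(S)\,\nu$, so $\Phi(S)\sim\Phi(T)$ in $\mathcal{P}_{n,n!-k}$; the same argument applied to $\Phi^{-1}$ gives the converse implication. Hence $\Phi$ descends to a well-defined bijection $\bar\Phi\colon \mathcal{P}_{n,k}/\!\sim\;\to\;\mathcal{P}_{n,n!-k}/\!\sim$, and counting equivalence classes on both sides yields $f_{n,k}=f_{n,n!-k}$.

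There is no serious obstacle here: the whole argument reduces to the observation that complementation commutes with the group action, which is immediate once one records that the action is by bijections of the finite set $S_n$. The statement is essentially a formal bookkeeping lemma, worth isolating only because it halves the range of $k$ that must be examined when studying $f_{n,k}$.
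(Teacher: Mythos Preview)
Your proof is correct and follows exactly the approach the paper sketches: the paper merely remarks that the equivalence relation $\sim$ intertwines the bijection $S\mapsto S_n\setminus S$ between $\mathcal{P}_{n,k}$ and $\mathcal{P}_{n,n!-k}$, and your argument supplies the details by checking equivariance of complementation under the two-sided $S_n\times S_n$ action.
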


In Table~\ref{tab:nonequivalentsubsets} we give $f_{n,k}$ for some small values of $n$ and $k$. For $n=3$, the numbers $(f_{n,k})_{k=1}^{5}$ can be found in~\cite{tripathi2025some}*{Section 4.1}. 

\begin{table}[h!]
    \centering
    \begin{tabular}{|l|l|l|l|}
      \hline
      $k$ & $n=3$ & $n=4$ & $n=5$  \\  
      \hline
      $1$ & $1$ & $1$ & $1$  \\  
      $2$ & $2$ & $4$ & $6$  \\  
      $3$ & $2$ & $10$ & $37$  \\  
      $4$ & $2$ & $41$ & $715$  \\  
      $5$ & $1$ & $103$ & $13710$  \\  
      $6$ & $1$ & $309$ & $256751$  \\  
      $7$ &  & $691$ & $4140666$  \\  
      $8$ &  & $1458$ & $58402198$  \\  
      $9$ &  & $2448$ & $726296995$  \\  
      $10$ &  & $3703$ & $8060937770$  \\  
      $11$ &  & $4587$ & $80604620206$  \\  
      $12$ &  & $5050$ & $732149722382$  \\  
      \hline
    \end{tabular}
    \caption{Number of non-equivalent subsets of size $k$ in $S_n$}
    \label{tab:nonequivalentsubsets}
\end{table}

It seems that the sequence $(f_{n, k})$ has not appeared before our work. We submitted this sequence on OEIS and it now appears as OEIS sequence A381842~\cite{oeisA381842}. The data for $n=5$ was provided on OEIS by Andrew Howroyd. In this regard, we should also mention that OEIS sequence A362763~\cite{oeisA362763} seems closely related to our sequence $(f_{n, k})$. It would be interesting to determine the order of growth for either of these sequences.
\section{Marcus--Ree inequality for infinite arrays and kernels}
\label{sec:Generalizations_and_Interpretations}
\subsection{Marcus--Ree inequality for infinite bistochastic arrays}
\label{sec:MR_infiniteArray}
A \emph{bistochastic array} is an infinite array $(A(i, j))_{i, j\in \N}$ such that $A(i, j)\geq 0$ for all $i, j\in \N$ and satisfies
\[\sum_{k=1}^{\infty} A(i, k)=1=\sum_{k=1}^{\infty}A(k, i), \quad \forall i\in \N.\]

Observe that any $n \times n$ bistochastic matrix $X$ can be naturally identified with a bistochastic array $K(X)$ where
\[
    K(X)(i, j) = \begin{cases}
                X(i, j), & i,j \in [n],\\
                \delta_{i=j}, & \text{otherwise}\;.
    \end{cases}
\]

In particular, the set of bistochastic arrays is indeed non-empty. We can naturally extend the usual Frobenius inner product on matrices to the bistochastic arrays as follows. Let $A$ and $B$ be two bistochastic arrays; we define 
\[\langle A, B\rangle = \sum_{i, j=1}^{\infty}A(i, j)B(j, i)\;.\]
Since the entries of $A$ and $B$ are nonnegative, the above quantity is always well-defined--albeit it may be equal to $+\infty$. This inner-product naturally defines a norm on bistochastic arrays and we let $\Omega_{\infty, 2}$ denote the set of all bistochastic arrays $A$ with the additional condition that \[\|A\|_{\ell^2}^2\coloneqq \langle A, A\rangle = \sum_{i, j=1}^{\infty} A(i, j)^2<\infty\;.\]

In Theorem~\ref{thm:MR_array} we establish an analog of the Marcus--Ree inequality~\eqref{eqn:MarcusRee} for bistochastic arrays. Before we state our result, we define the maximal trace for a bistochastic array. Let $S_{\mathbb{N}}$ denote the set of all permutations of $\N$, that is, 
\[S_{\mathbb{N}}=\{\sigma:\N\to \N: \sigma \text{ is bijective, and }\sigma(i)=i \text{ for all but finitely many } i\}\;.\]
We define the maximal trace for the bistochastic array $A$ as 
\[\mathrm{maxtrace}(A)\coloneqq \sup_{\sigma\in S_{\N}} \sum_{i=1}^{\infty}A(i, \sigma(i))\;.\]

\begin{theorem}
\label{thm:MR_array}
Let $A, B\in \Omega_{\infty, 2}$. Then, 
\[\langle A, B\rangle \leq \sup_{\sigma\in S_{\N}} \sum_{i=1}^{\infty}A(i, \sigma(i))\;.\]
In particular, we also have 
\[\|A\|_{\ell^2}^2\leq \sup_{\sigma\in S_{\N}} \sum_{i=1}^{\infty}A(i, \sigma(i))\;.\]
\end{theorem}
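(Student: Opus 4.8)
The plan is to reduce the infinite-dimensional statement to the finite-dimensional Marcus--Ree inequality~\eqref{eqn:MarcusRee} by a truncation and exhaustion argument. Fix $A, B \in \Omega_{\infty, 2}$. The first step is to observe that because $\|A\|_{\ell^2}^2 = \sum_{i,j} A(i,j)^2 < \infty$ and each row/column of $A$ sums to $1$, the ``mass'' of $A$ concentrates (approximately) on a finite principal block: given $\eps > 0$ there is an $N$ such that $\sum_{i > N \text{ or } j > N} A(i,j)^2 < \eps$, and similarly for $B$. However, the $N \times N$ principal submatrix of $A$ is in general not bistochastic, so one cannot invoke~\eqref{eqn:MarcusRee} directly.

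The second step is to repair this. I would consider, for each $N$, the principal submatrix $A_N = (A(i,j))_{i,j \le N}$ and ``complete'' it to a genuine bistochastic matrix $\widehat{A}_N \in \Omega_N$ by a bistochastic correction that is small in Frobenius norm when $N$ is large: the row and column deficits of $A_N$ are $r_i = 1 - \sum_{j \le N} A(i,j) = \sum_{j > N} A(i,j)$ and similarly $c_j$, and these satisfy $\sum_i r_i = \sum_j c_j$, which is small once the tail $\ell^2$-mass is small (indeed $\sum_i r_i \le \sum_i \sum_{j>N} A(i,j)$, and one controls this using that the total tail mass, suitably measured, is $o(1)$ — here one should be a little careful and may instead use that $\sum_i r_i^2 \to 0$ together with a crude bound, or pass through the fact that $A(i,j) \to 0$ along rows). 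One then adds a rank-controlled nonnegative matrix supported on $[N]^2$ with row sums $r_i$ and column sums $c_j$ (e.g.\ the outer-product-type correction $r_i c_j / (\sum_k r_k)$, which is bistochastic-completing and has Frobenius norm $O(\sum_i r_i) = o(1)$). Define $\widehat{B}_N$ analogously. Applying~\eqref{eqn:MarcusRee} in the form $\langle \widehat{A}_N, \widehat{B}_N \rangle \le \mathrm{maxtrace}(\widehat{A}_N)$ — note that the polarized Marcus--Ree inequality $\langle X, Y\rangle \le \mathrm{maxtrace}(X)$ for $X,Y \in \Omega_n$ follows from the stated one by the standard argument (for $X, Y \in \Omega_n$ one has $\langle X, Y \rangle \le \tfrac12(\|X\|_F^2 + \|Y\|_F^2) \le \tfrac12(\mathrm{maxtrace}(X) + \mathrm{maxtrace}(Y))$, or more sharply by an averaging over Birkhoff decompositions) — gives an inequality between truncated quantities.

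The third step is to take $N \to \infty$ and track error terms. On the left, $\langle \widehat{A}_N, \widehat{B}_N \rangle \to \langle A, B \rangle$: the principal-block inner products $\sum_{i,j \le N} A(i,j) B(j,i)$ increase to $\langle A, B\rangle$ by monotone convergence (all terms nonnegative), and the corrections contribute $o(1)$ by Cauchy--Schwarz against the $o(1)$ Frobenius bound. On the right, one checks $\mathrm{maxtrace}(\widehat{A}_N) \le \mathrm{maxtrace}(A) + o(1)$: any permutation $\sigma$ of $[N]$ extends to an element of $S_{\N}$ fixing everything beyond $N$, so $\sum_{i \le N} \widehat{A}_N(i,\sigma(i)) \le \sum_{i \le N} A(i,\sigma(i)) + (\text{correction term}) \le \sup_{\tau \in S_{\N}} \sum_i A(i,\tau(i)) + o(1)$, where the correction term is again bounded by the $o(1)$ Frobenius norm of the completing matrix times $\sqrt{N}$ — this last point needs the completion to have $\ell^1$-norm $o(1)$, not merely $\ell^2$-norm $o(1)$, which is why the outer-product correction (whose entries sum to exactly $\sum_i r_i$) is the right choice. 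Combining, $\langle A, B\rangle \le \mathrm{maxtrace}(A) + o(1)$, and letting $\eps \to 0$ (hence $N \to \infty$) yields the claim. Taking $B = A$ gives the ``in particular'' statement.

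The main obstacle I anticipate is controlling the completion errors uniformly in the right norm: the tail $\ell^2$-mass being small does not by itself make the $\ell^1$-type deficit $\sum_i r_i$ small (these are genuinely different), so one needs an additional input — for instance, that for a bistochastic array in $\Omega_{\infty,2}$ the row tails $\sum_{j > N} A(i,j)$ go to zero and, summed against the $N$ rows, still produce an $o(1)$ total because most rows are already essentially concentrated below $N$. Making this quantitative (choosing $N$ so that simultaneously the $\ell^2$-tail and the $\ell^1$-deficit of both $A$ and $B$ are below $\eps$) is the technical heart; everything else is monotone convergence and Cauchy--Schwarz bookkeeping. An alternative that sidesteps the $\ell^1$ issue is to avoid completion entirely and instead work with sub-bistochastic (doubly substochastic) matrices, for which a Marcus--Ree-type bound can be derived by padding to a bistochastic matrix of size $2N$ in the standard way and reading off the relevant block — this may be cleaner and is worth trying first.
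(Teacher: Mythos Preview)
Your alternative at the end --- pad the substochastic principal block to a bistochastic matrix of size $2N$ --- is exactly the paper's Lemma~\ref{lem:ExtensionLemma}, so you have the right tool. What you are missing is an \emph{asymmetry} that makes the argument clean: the paper never truncates or modifies $A$ at all. It only truncates $B$ to its principal block $B_n$, pads $B_n$ to a genuine bistochastic $C_n\in\Omega_N$ (some $N\ge n$), and then observes two things. First, since $A\ge 0$ and $\mathbf{C_n}\ge \mathbf{B_n}$ entrywise, one has $\langle A,\mathbf{B_n}\rangle\le\langle A,\mathbf{C_n}\rangle$ for free --- no error term. Second, for \emph{any} finite bistochastic $X\in\Omega_N$, the linear functional $X\mapsto\sum_{i,j\le N}A(i,j)X(i,j)$ attains its maximum at a permutation $\sigma\in S_N$ by Birkhoff--von~Neumann, and then
\[
\sum_{i\le N}A(i,\sigma(i))\;\le\;\sum_{i\le N}A(i,\sigma(i))+\sum_{i>N}A(i,i)\;\le\;\sup_{\tau\in S_{\N}}\sum_{i}A(i,\tau(i)),
\]
because extending $\sigma$ by the identity gives an element of $S_{\N}$. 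So the right-hand side is bounded by $\mathrm{maxtrace}(A)$ with no correction. The only limit to take is $\|B-\mathbf{B_n}\|_{\ell^2}\to 0$, handled by Cauchy--Schwarz on the left.

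Your symmetric approach (complete both $A_N$ and $B_N$) runs into exactly the obstacle you diagnosed: to control $\mathrm{maxtrace}(\widehat A_N)-\mathrm{maxtrace}(A)$ you need the correction to be small in an $\ell^1$-type sense along diagonals, and the tail deficits $\sum_{i\le N}r_i=\sum_{i\le N}\sum_{j>N}A(i,j)$ need \emph{not} be $o(1)$ for $A\in\Omega_{\infty,2}$ (the total $\ell^1$ mass of $A$ is infinite). Your $2N$-padding variant, applied symmetrically, has the same problem on the maxtrace side: the padding of $A_N$ contributes diagonal entries $1-r_i$, $1-c_i$ that can be close to $1$, so $\mathrm{maxtrace}(\widetilde A_{2N})$ bears no useful relation to $\mathrm{maxtrace}(A)$. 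The fix is simply not to pad $A$ --- only $B$. (Incidentally, your AM--GM justification of the polarized inequality $\langle X,Y\rangle\le\mathrm{maxtrace}(X)$ only gives the symmetric bound $\tfrac12(\mathrm{maxtrace}(X)+\mathrm{maxtrace}(Y))$; the one-sided version you need really does come from Birkhoff, as in your parenthetical.)
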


Note that for any $n\times n$ bistochastic matrix $X$, the  $\ell_2$ norm of $K(X)$ is infinite. Theorem~\ref{thm:MR_array} is interesting only when the maximal trace is finite. We refer the reader to Example~\ref{exp:BistochasticArray} for a non-trivial example of a bistochastic array that has finite maximal trace and finite $\ell_2$ norm. We now state and prove Marcus--Ree inequality for bistochastic arrays. Before we begin the proof, we need another notation. 

Let $X\in \Omega_n$ be an $n\times n$ bistochastic matrix. We will often identify $X$ with an infinite array ${\bf X}$ such that ${\bf X}(i, j)=X(i, j)$ for $i, j\in [n]$ and $0$ otherwise. For any infinite array $A$, we note that 
\[ \langle A, {\bf X}\rangle = \langle A_n, X\rangle_{F}=\sum_{i, j=1}^{n}A(i, j)X(i, j)\;.\]
\begin{proof}[Proof of Theorem$~\ref{thm:MR_array}$]
    The second inequality follows from the first by taking $B=A^{T}$. Therefore, it suffices to prove only the first part.
    If $\sum_{i=1}^{\infty}A(i, \sigma(i))=+\infty$, there is nothing to prove. Therefore, we assume that $\sum_{i=1}^{\infty}A(i, \sigma(i))<+\infty$.
    
    Consider the map 
    \[\Omega_n \ni X\mapsto \Lambda(X) \coloneqq \langle A, {\bf X}\rangle =\sum_{i, j=1}^{n}A(i, j)X(i, j)\;.\]
    Since $\Omega_n$ is convex and $X\mapsto \Lambda(X)$ is linear, it follows that $\Lambda$ attains its maximum at some extreme point of $\Omega_n$. Since the extreme points of $\Omega_n$ are precisely the permutation matrices by Birkhoff-von Neumann theorem~\cite{birkhoff1946three}, it follows that 
    \begin{equation}
    \label{eqn:FiniteReduction}
        \Lambda(X) \leq \sup_{\sigma\in S_n}\sum_{i=1}^{n}A(i, \sigma(i))+\sum_{i=n+1}^{\infty}A(i, i)\leq \sup_{\sigma\in S_{\N}}\sum_{i=1}^{\infty}A(i, \sigma(i))\;,
    \end{equation}
    for any $X\in \Omega_n$ and $n\in \N$.
  
    Now let $B\in \Omega_{\infty, 2}$. Let $B_{n}$ be the $n\times n$ principal submatrix of $B$, that is, $B_n(i, j)=B(i, j)$ for $1\leq i, j\leq n$. Observe that
    \[ \langle A, B\rangle =  \langle A, {\bf B_n}\rangle +  \langle A, B-{\bf B_n}\rangle \leq \langle A, {\bf B_n}\rangle +  \|A\|_{\ell^2}\|B-{\bf B_n}\|_{\ell^2}\;,\]
    where we used the Cauchy-Schwarz inequality for the second term in the last inequality. It is easy to see that $\|B-{\bf B_n}\|_{\ell^2}\to 0$ as $n\to \infty$. For the first term, we wish to appeal to~\eqref{eqn:FiniteReduction}. However, the matrix $B_n$ is not bistochastic, and hence we can not directly appeal to~\eqref{eqn:FiniteReduction}. To fix this issue, we take a bistochastic extension $C_n$ of $B_n$. 
    That is, suppose that there exists an $N\times N$ bistochastic matrix $C_n$ for some $N\geq n$ such that $C_n(i,  j)=B_n(i, j)$ for $1\leq i, j\leq n$. Then,  
    \begin{align*}
        \langle A, {\bf B_n} \rangle = \langle A, {\bf C_n}\rangle + \langle A, {\bf B_n}-{\bf C_n}\rangle \leq \langle A, {\bf C_n}\rangle,
    \end{align*}
    where the last inequality follows from the fact that ${\bf B_n} - {\bf C_n} \leq 0$ by construction. We now appeal to~\eqref{eqn:FiniteReduction} to conclude that 
    \[ \langle A, {\bf C_n}\rangle \leq \sup_{\sigma\in S_{\N}}\sum_{i=1}^{\infty}A(i, \sigma(i))\;. \]
    The  proof is therefore complete by Lemma~\ref{lem:ExtensionLemma} that guarantees the existence of $C_n$ and letting $n\to \infty$.
\end{proof}

\begin{lemma}
\label{lem:ExtensionLemma}
   Let $B$ be $n\times n$ matrix with nonnegative entries. Let 
   \[r_{i}\coloneqq \sum_{k=1}^{n}B(i, k), \quad c_{i}\coloneqq \sum_{k=1}^{n}B(k, i), \quad \forall i\in [n]\;.\]
   If $r_i, c_i \leq 1$ for all $i\in [n]$, then there exists an $N\times N$ bistochastic matrix $D$ for some $N\geq n$ such that 
   \[ D(i, j)=B(i, j), \quad i, j\in [n]\;.\]
\end{lemma}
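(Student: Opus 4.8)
The plan is to realise $D$ as a bordered block matrix whose additional blocks are rank one and are assembled from the row and column deficiencies of $B$. Keep $r_i,c_i$ as in the statement and set $s\coloneqq\sum_{i=1}^n(1-r_i)$ and $t\coloneqq\sum_{i=1}^n r_i$, so that $s+t=n$. The first step is the elementary but crucial observation that summing all entries of $B$ in two different orders yields $\sum_i r_i=\sum_{i,j}B(i,j)=\sum_j c_j$, and hence $\sum_i(1-r_i)=\sum_j(1-c_j)=s$: the total row deficiency equals the total column deficiency. This balance is exactly what will allow a single rank-one correction block to absorb the row slack and the column slack simultaneously.

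Next I would dispose of the degenerate case. If $s=0$ then every $r_i=1$, hence every $c_j=1$ by the identity above, so $B$ is already bistochastic and one takes $N=n$, $D=B$. Assuming $s>0$, take $N=2n$ and define the $n\times n$ blocks
\[ X(i,j)=\frac{(1-r_i)(1-c_j)}{s},\qquad Z(i,j)=\frac{r_i c_j}{t}, \]
with the convention $Z\equiv 0$ when $t=0$ (this is harmless, since $t=0$ forces $B=0$ and thus $r_ic_j=0$). All entries of both blocks are nonnegative, and the candidate completion is
\[ D=\begin{pmatrix} B & X \\ X & Z \end{pmatrix}. \]

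Finally I would verify the row and column sums, which is where the balance identity pays off. For a top row $i\in[n]$ one gets $r_i+\sum_j X(i,j)=r_i+(1-r_i)=1$ using $\sum_j(1-c_j)=s$; for a bottom row $n+i$ one gets $\sum_j X(i,j)+\sum_j Z(i,j)=(1-r_i)+r_i=1$ using $\sum_j c_j=t$; the column sums are checked in the same way using $\sum_i(1-r_i)=s$ and $\sum_i r_i=t$. Hence $D$ is $2n\times 2n$, bistochastic, and agrees with $B$ on its leading $n\times n$ block. I do not expect a genuine obstacle here: the only real idea is to notice that the deficiency vectors are balanced and to use their normalised outer product (which is what forces the correction block to have size $n$ rather than a single extra row and column); the remainder is bookkeeping, in particular keeping track of the degenerate cases $s=0$ and $t=0$.
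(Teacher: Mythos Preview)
Your proof is correct and follows essentially the same strategy as the paper: extend $B$ to a $2n\times 2n$ bistochastic block matrix with $B$ in the upper-left corner and a rank-one block built from the vectors $(r_i)$ and $(c_j)$ in the lower-right corner (the paper's lower-right block is $c_ir_j/\lVert B\rVert_1$, which is your $Z$ up to transpose, since $\lVert B\rVert_1=t$). The only cosmetic difference is in the off-diagonal blocks: the paper places the diagonal matrices $\operatorname{diag}(1-r_i)$ and $\operatorname{diag}(1-c_i)$ there, whereas you use the single rank-one block $X(i,j)=(1-r_i)(1-c_j)/s$ in both positions; either choice makes the row and column sums come out to $1$, and both rely on the same balance identity $\sum_i r_i=\sum_j c_j$.
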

\begin{proof}
If $B$ is already bistochastic, there is nothing to be done. Assume that $B$ is not bistochastic.
Let $R$ be an $n\times n$ diagonal matrix such that $R(i, i)=1-r_i$ and let $C$ be an $n\times n$ diagonal matrix such that $C(i, i)=1-c_i$.  
Let $X$ be an $n\times n$ matrix such that 
\[ X(i, j)=\frac{c_i\cdot r_j}{\|B\|_{1}}\;,\]
with the convention that $0/0=0$.
It is easy to check that $X$ satisfies 
\begin{equation}
\label{eqn:Xcondition}
    \sum_{k=1}^{n}X(i, k) = c_i, \quad \sum_{k=1}^{n}X(k, i) = r_i,
\end{equation}
for all $i\in [n]$. 
Now consider the $2n\times 2n$ matrix $D$, defined as 
\[ D=\begin{pmatrix} A & R \\
C & X
\end{pmatrix}\;.
\]
It is easy to verify that $D$ is a bistochastic matrix using~\eqref{eqn:Xcondition}. 
\end{proof}

\begin{example}
\label{exp:BistochasticArray}
As promised, we close this section with an example of a bistochastic array with finite maximal trace and we verify that it satisfies the Marcus--Ree inequality. We will define an infinite array $A$ recursively. We set the first row and first column as 
\[A(1,1)=A(1,2)=A(2,1)=\frac{1}{2} \text{ and }  A(1,j)=A(j,1)=0\;,\quad \forall j\geq 3\;.\] 
Given the first $k-1$ rows and columns for some $k\geq 2$, we fill in the remaining entries in the $k$-th row and column with $1/2^k$ so that the $k$-th row and the column sums to $1$. More precisely, we set
\[ A(k, i)=A(i, k)=\frac{1}{2^k}\;, k \leq i \leq m_k\;, \]
where $m_k$ is chosen so that
\begin{equation}
\label{eqn:ConditionM_k}
    \sum_{i=1}^{m_k}A(k, i)= \sum_{i=1}^{k-1} A(k, i)+ \frac{(m_k-k+1)}{2^k}=1\;.
\end{equation}

In particular,
 
 \[A=\left(\begin{array}{ccccccccccc}
\frac{1}{2} & \frac{1}{2} & 0 & 0 & 0 & 0 & 0 & 0 & 0 & \cdots \\[1.2ex]
\frac{1}{2} & \frac{1}{4} & \frac{1}{4} & 0 & 0 & 0 & 0 & 0 & 0 & \cdots\\[0.7ex]
0 & \frac{1}{4} & \frac{1}{8} & \frac{1}{8}& \frac{1}{8}& \frac{1}{8}& \frac{1}{8}& \frac{1}{8}& 0&  \cdots  \\[0.7ex]
0 & 0 &  \frac{1}{8}  & \frac{1}{16} &\frac{1}{16} &\frac{1}{16} &\frac{1}{16} &\frac{1}{16} &\frac{1}{16}  &\cdots\\[0.7ex]
0 & 0 &  \frac{1}{8}  & \frac{1}{16} & \frac{1}{32} & \frac{1}{32} &\frac{1}{32} &\frac{1}{32} &\frac{1}{32} & \cdots\\[0.7ex]
0 & 0 &  \frac{1}{8}  &\frac{1}{16} & \frac{1}{32} & \frac{1}{64} & \frac{1}{64} &\frac{1}{64} &\frac{1}{64}  &  \cdots \\[0.7ex]
0 & 0 &  \frac{1}{8}  &\frac{1}{16} &\frac{1}{32} & \frac{1}{64} & \frac{1}{128} & \frac{1}{128} & \frac{1}{128} &  \cdots\\[0.7ex]
0 & 0 &  \frac{1}{8}  &\frac{1}{16} & \frac{1}{32} &\frac{1}{64} & \frac{1}{128} & \frac{1}{256} & \frac{1}{256} &  \cdots\\[0.7ex]
0 & 0 & 0 & \frac{1}{16} & \frac{1}{32} & \frac{1}{64} & \frac{1}{128} & \frac{1}{256} & \frac{1}{512} &  \cdots  \\[0.7ex]
\vdots &\vdots &\vdots &\vdots &\vdots &\vdots &\vdots &\vdots &\vdots   & \ddots \\[0.7ex]
\end{array}\right)
\]
We now show that 
\[
\|A\|_{\ell^2}^2\leq \sup_{\sigma\in S_{\N}} \sum_{i=1}^{\infty}A(i, \sigma(i))<\infty.
\]
We begin by verifying that $\sup_{\sigma\in S_{\mathbb{N}}} \sum_{i=1}^{\infty}A(i, \sigma(i)) \leq 2$. Note that $\frac{1}{2^k}$ can appear at most twice in the sum $\sum_{i=1}^{\infty}A(i, \sigma(i))$ for any $\sigma\in S_{\N}$. In particular,

\[
\sum_{i=1}^{\infty}A(i, \sigma(i)) \leq \sum_{i=1}^{\infty}\frac{2}{2^i} = 2,
\]
for any $\sigma \in S_{\mathbb{N}}$.

Now, we establish an upper bound for $ \|A\|_{\ell^2}^2$. Note that $\frac{1}{2^k}$ appears in $A$ only in the $k$-th row and $k$-th column. Furthermore, it follows from~\eqref{eqn:ConditionM_k} that $\frac{1}{2^k}$ appears at most $2^{k+1}-1$ times in $A$. 

It follows that 
\begin{align}
    \|A\|_{\ell^2}^2 &= \sum_{\stackrel{i, j\in \N}{\min\{i, j\}\leq 5}} A(i, j)^2 + \sum_{i, j\geq 5} A(i, j)^2 \nonumber\\
    &\leq 3\frac{1}{2^2} +3\frac{1}{4^2}+11\frac{1}{8^2}+27\frac{1}{16^2}+51\frac{1}{32^2} + \sum_{i=6}^{\infty}\frac{(2^{i+1}-1)}{2^{2i}}\nonumber\\
    &<  3\frac{1}{2^2} +4\frac{1}{4^2}+11\frac{1}{8^2}+27\frac{1}{16^2}+51\frac{1}{32^2}\leq 1.328\;.
    \label{eqn:l2bound}
\end{align}

Finally, we verify that $A$ satisfies the Marcus--Ree inequality. To do this, we show that the maximal trace is at least $4/3$. To this end, let $\sigma: \mathbb{N} \to \mathbb{N}$ be given by  
\[
\sigma(2i) = 2i-1, \quad \sigma(2i-1) = 2i, \quad \forall i \in \mathbb{N}.
\]
It is easy to see that
\begin{align*}
    \sum_{i=1}^{\infty}A(i, \sigma(i))&=\sum_{i=1}^{\infty}A(2i,2i-1)+\sum_{i=1}^{\infty}A(2i-1,2i)\\
    &=2\sum_{i=1}^{\infty}\frac{1}{2^{\min\{2i,2i-1\}}}\\
    &=2 \sum_{i=1}^{\infty}\frac{1}{2^{2i-1}}=\frac{4}{3}\;,
\end{align*}
where the second equality uses the symmetry of $A$. One can easily construct a sequence $\tau_n\in S_{\N}$ such that 
\[\sum_{i=1}^{\infty}A(i, \tau_n(i)) \to \sum_{i=1}^{\infty}A(i, \sigma(i))\;,\]
as $n\to \infty$. In particular, 
\[\sup_{\sigma\in S_{\mathbb{N}}} \sum_{i=1}^{\infty}A(i, \sigma(i))\geq \frac{4}{3}>\|A\|_{\ell^2}^2\;.\]

\end{example}

\subsection{A continuous analogue of Marcus--Ree inequality}
\label{sec:MR_kernel}
Let $\mu$ be a measure on $[0, 1]$. A measure $\pi$ on $[0, 1]^2$ is said to be a \emph{coupling} of $\pi$ if
\begin{equation}
\label{eqn:couplingDefinition}
    \pi(A\times [0, 1])=\pi([0, 1]\times A)=\mu(A)\;\;.
\end{equation}
When $\mu$ is the uniform measure on the set $\{i/n: i\in [n]\}\subseteq [0, 1]$, any coupling of $\pi$ of $\mu$ can be written as an $n\times n$ bistochastic matrix $A$ where $A(i, j)=\pi((i, j))$. In this light, Marcus--Ree inequality is essentially a result about the couplings of uniform measure on discrete spaces. Also, recall that the Marcus--Ree inequality~\eqref{eqn:MarcusRee} follows from a more general inequality namely 
\begin{equation}
\label{eqn:Marcus-Ree-General}
    \sup_{B\in \Omega_n}\langle A, B\rangle_{F}\leq \sup_{P\in \mathfrak{S}_{n}}\langle A, P\rangle_{F}\;,
\end{equation}
where $\mathfrak{S}_n$ denotes the set of all $n\times n$ permutation matrices. Birkhoff's theorem identifies the extreme points of $\Omega_n$ as the set of permutation matrices. The proof of the above inequality follows from Birkhoff's theorem and observing that $B\mapsto \langle A, B\rangle_{F}$ is a convex function on $\Omega_n$.

In this section, we extend the Marcus--Ree inequality for kernels that can be seen as a coupling of uniform measure on $[0, 1]$. A \emph{kernel} is a bounded measurable function $W:[0, 1]^2\to \R$. We say that a kernel $W$ is \emph{bistochastic} if $W(x, y)\geq 0$ for a.e. $(x, y)\in [0, 1]^2$ and 
\[\int W(x, z)\diff z = 1=\int W(z, x)\diff z\;,\]
for a.e. $x\in [0, 1]$. We denote the set of all bistochastic kernels by $\Wcal_{\mathrm{bistoch}}$.
One can think of a bistochastic kernel $W$ as a copula, i.e. a coupling of uniform measure on $[0, 1]$. Let $W$ be a bistochastic kernel. Let $\pi_{W}$ be measure on $[0, 1]^2$ with density $W$ with respect to the uniform measure on $[0, 1]^2$, that is, 
\[\pi(\diff x\diff y) = W(x, y)\diff x\diff y\;.\]
It is easy to verify that $\pi_{W}$ is a coupling of Lebesgue measure on $[0, 1]$. We denote by $\Pi$ the set of all couplings of Lebesgue measure on $[0, 1]$. We now state a Marcus--Ree inequality for bistochastic kernels. Let $\Tcal$ denote the set of all Lebesgue measure-preserving maps $T:[0, 1]\to [0, 1]$. For brevity, we refer to such maps as measure-preserving maps. 
\begin{theorem}
\label{thm:continum}
    Let $W$ be a continuous bistochastic kernel. Then, 
    \[\sup_{\pi\in \Pi}\int W(x, y)\pi(\diff x\diff y) \leq \sup_{T\in \Tcal}\int W(x, T(x))\diff x \;. \]
    Taking $\pi=\pi_{W}$ we obtain 
    \[ \norm{2}{W}^2\leq \sup_{T\in \Tcal}\int W(x, T(x))\diff x\;.\]
\end{theorem}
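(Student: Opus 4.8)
The plan is to discretize and reduce to the finite-dimensional Marcus--Ree inequality \eqref{eqn:Marcus-Ree-General}, then pass to the limit using the uniform continuity of $W$. Fix $n\in\N$, partition $[0,1]$ into the intervals $I_i=[(i-1)/n,\,i/n]$ for $i\in[n]$, and set
\[A^{(n)}_{ij}\coloneqq n\int_{I_i}\int_{I_j}W(x,y)\,\diff y\,\diff x,\qquad i,j\in[n].\]
Using $\int_0^1 W(x,y)\,\diff y = 1 = \int_0^1 W(y,x)\,\diff y$ for a.e.\ $x$, one checks at once that $A^{(n)}=(A^{(n)}_{ij})$ is an $n\times n$ bistochastic matrix. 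Let $\widetilde W^{(n)}$ be the step function equal to the cell average $n^2\int_{I_i}\int_{I_j}W = nA^{(n)}_{ij}$ on $I_i\times I_j$; since $W$ is continuous on the compact square it is uniformly continuous, so $\|W-\widetilde W^{(n)}\|_\infty \le \omega_W(\sqrt 2/n)$, where $\omega_W$ denotes the modulus of continuity of $W$.

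For the upper bound on the left-hand side, I would take an arbitrary coupling $\pi\in\Pi$ and put $B_{ij}\coloneqq n\,\pi(I_i\times I_j)$; the marginal conditions \eqref{eqn:couplingDefinition} show $B=(B_{ij})\in\Omega_n$. Then, since $\pi$ is a probability measure,
\[\int W\,\diff\pi \;\le\; \int \widetilde W^{(n)}\,\diff\pi + \omega_W(\sqrt 2/n) \;=\; \sum_{i,j=1}^n A^{(n)}_{ij}B_{ij} + \omega_W(\sqrt 2/n) \;=\; \langle A^{(n)},B\rangle_{F} + \omega_W(\sqrt 2/n),\]
and \eqref{eqn:Marcus-Ree-General} gives $\langle A^{(n)},B\rangle_{F}\le \mathrm{maxtrace}(A^{(n)})$. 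Taking the supremum over $\pi\in\Pi$ yields $\sup_{\pi\in\Pi}\int W\,\diff\pi \le \mathrm{maxtrace}(A^{(n)}) + \omega_W(\sqrt 2/n)$.

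For the matching lower bound on the right-hand side, let $\sigma\in S_n$ achieve $\mathrm{maxtrace}(A^{(n)})=\sum_i A^{(n)}_{i\sigma(i)}$, and let $T_\sigma\colon[0,1]\to[0,1]$ be the piecewise-translation map sending $I_i$ affinely onto $I_{\sigma(i)}$; it is a bijection up to a null set and preserves Lebesgue measure, so $T_\sigma\in\Tcal$. For $x\in I_i$ the point $(x,T_\sigma(x))$ lies in the cell $I_i\times I_{\sigma(i)}$ of diameter $\sqrt 2/n$, whence $W(x,T_\sigma(x)) \ge nA^{(n)}_{i\sigma(i)} - \omega_W(\sqrt 2/n)$; integrating over each $I_i$ and summing gives
\[\sup_{T\in\Tcal}\int W(x,T(x))\,\diff x \;\ge\; \int W(x,T_\sigma(x))\,\diff x \;\ge\; \mathrm{maxtrace}(A^{(n)}) - \omega_W(\sqrt 2/n).\]
Combining the two displays, $\sup_{\pi\in\Pi}\int W\,\diff\pi \le \sup_{T\in\Tcal}\int W(x,T(x))\,\diff x + 2\,\omega_W(\sqrt 2/n)$ for every $n$; letting $n\to\infty$ and using $\omega_W(\sqrt 2/n)\to 0$ gives the first inequality. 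The second follows by taking $\pi=\pi_W$, for which $\int W\,\diff\pi_W = \int_{[0,1]^2}W(x,y)^2\,\diff x\,\diff y = \norm{2}{W}^2$.

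The only genuinely delicate point is forcing both sides to be compared against the \emph{same} combinatorial quantity $\mathrm{maxtrace}(A^{(n)})$: this dictates the specific choice of $T_\sigma$ as an interval permutation (so that $(x,T_\sigma(x))$ never leaves a single grid cell) and the normalization $B_{ij}=n\,\pi(I_i\times I_j)$ (so that $B$ is exactly bistochastic, not merely sub-stochastic). Everything else — bistochasticity of $A^{(n)}$ and $B$, and the modulus-of-continuity bookkeeping — is routine. Continuity of $W$ enters only to guarantee $\omega_W(\sqrt 2/n)\to 0$; the same argument works whenever the cell-averaging step functions converge to $W$ in $L^\infty$, but some such regularity is genuinely needed and the scheme does not extend verbatim to arbitrary bounded measurable bistochastic kernels.
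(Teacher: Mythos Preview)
Your proof is correct and follows essentially the same route as the paper's: discretize $W$ by cell averages over a uniform grid, turn an arbitrary coupling $\pi$ into a finite bistochastic matrix, apply the finite Marcus--Ree inequality, and realize the optimal permutation by a piecewise-affine interval permutation $T_\sigma\in\Tcal$, controlling all errors by $\|W-\widetilde W^{(n)}\|_\infty\to 0$. The only cosmetic differences are that the paper uses dyadic partitions into $2^n$ pieces and phrases the uniform convergence via the martingale convergence theorem rather than the modulus of continuity; your normalizations ($A^{(n)}_{ij}=n\int_{I_i\times I_j}W$ and $B_{ij}=n\,\pi(I_i\times I_j)$) are in fact slightly cleaner than the paper's.
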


We point out that the analog of Birkhoff's theorem for $\Wcal_{\mathrm{bistoch}}$ is false. The extreme points of $\Pi$ are not necessarily concentrated on the graph of a function. In fact, there are extreme points in $\Pi$ whose support is the full square $[0, 1]^2$~\cite{losert1982counter}. Therefore, the standard proof of Marcus--Ree inequality does not directly extend to this case. This is what makes Theorem~\ref{thm:continum} interesting.

We begin with some preparation before we start the proof. For $n\in \N$, let $\Dcal_n$ denote the dyadic partition of $[0, 1]$. That is, $\Dcal_n=\{I^n_{k}: 0\leq k\leq 2^n-1\}$ where 
$I^n_{k}=[k2^{-n}, (k+1)2^{-n})$ for all $1\leq k\leq 2^n-2$ and
$I^n_{2^n-1}=[1-2^{-n}, 1]$. Let $\Fcal_n$ denote the $\sigma$-algebra generated by the sets of the form $I^n_{i}\times I^n_j$ where $I^n_i, I^n_j\in \Dcal_n$. We say that a kernel $W$ is finite-dimensional if it is measurable with respect to the sigma-algebra $\Fcal_n$ for some $n$. Given a kernel $W$, we define a finite-dimensional kernel $W_n=\mathbb{E}(W\vert \Fcal_n)$. Note that $W_n$ is constant on $I^n_{i}\times I^n_j$ and 
\[W_n(x, y) = \frac{1}{|I^n_i\times I^n_j|}\int_{I^n_i\times I^n_j} W(u, v)\diff u\diff v, \quad \text{if }\; (x, y)\in I^n_i\times I^n_j\;.\]
We make some observations.
\begin{enumerate}
    \item If $W$ is bistochastic, then so is $W_n$. 
    \item By Martingale convergence theorem, $W_n\to W$ a.e. in $L^2$ as $n\to \infty$. If $W$ is continuous, then $W_n\to W$ in $L^{\infty}$.
\end{enumerate}
Let $W_n\in \Fcal_n$ be a finite-dimensional symmetric kernel. We associate to $W_n$ a $2^n\times 2^n$ bistochastic matrix $A_n$ as
\[
A_n(i,j) \coloneqq 2^{-n} W_n(x, y), \quad \text{if}\;\; (x, y)\in I^n_{i}\times I^n_j\;.
\]
Notice that $\| A_n \|_{F}^2 = \| W_n \|_{L^2}^2$. Let $\sigma\in S_{2^n}$ be a permutation. Let $T_\sigma:[0, 1]\to [0, 1]$ be the Lebesgue measure-preserving map which is an affine linear homeomorphism taking $I^n_{k}$ to $I^n_{\sigma(k)}$ for each $k$. Then,
\[
\sum_{i=1}^{n} A_n(i, \sigma(i)) = \int W(x, T_{\sigma}(x))\diff x\;.
\]
In particular, we get the following lemma as an immediate consequence of the Marcus--Ree inequality.
\begin{lemma}
\label{lemma:FiniteDimensional}
Let $W_n\in \Fcal_n$ be a finite-dimensional bistochastic kernel. Then, 
\[
\| W_n \|_{L^2}^2 = \| A_n \|_{F}^2 \leq \max_{\sigma\in S_{2^n}} \sum_{i=1}^{n} A_n(i, \sigma(i)) = \sup_{\sigma\in S_{2^n}}\int W_n(x, T_{\sigma}(x))\diff x\;.
\]
\end{lemma}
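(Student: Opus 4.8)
The plan is to recognize this lemma as nothing more than the classical Marcus--Ree inequality~\eqref{eqn:MarcusRee} applied to the finite bistochastic matrix $A_n$, repackaged through the two dictionary identities recorded in the discussion preceding the lemma. Concretely, the three middle links of the displayed chain correspond to a scaling identity for the Frobenius norm, the original Marcus--Ree inequality, and a scaling identity for the maximal trace.

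First I would check that $A_n$ is a bona fide $2^n \times 2^n$ bistochastic matrix. Since $W$ is bistochastic, the first observation above gives that $W_n = \mathbb{E}(W\vert\Fcal_n)$ is again bistochastic, hence nonnegative with unit row and column integrals. As $W_n$ equals the constant $2^n A_n(i,j)$ on the dyadic box $I^n_i \times I^n_j$, integrating the relation $\int_0^1 W_n(x,y)\,\diff y = 1$ over $x\in I^n_i$ and splitting $[0,1]$ into the $2^n$ dyadic intervals of length $2^{-n}$ yields $\sum_j A_n(i,j) = 1$; the column sums are handled symmetrically, and nonnegativity of $A_n$ is immediate. Thus $A_n \in \Omega_{2^n}$, and~\eqref{eqn:MarcusRee} applies.

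Next I would apply~\eqref{eqn:MarcusRee} to $A_n$ to get $\|A_n\|_{\operatorname{F}}^2 \le \max_{\sigma\in S_{2^n}} \sum_{i=1}^{2^n} A_n(i,\sigma(i))$, and then substitute the identities $\|A_n\|_{\operatorname{F}}^2 = \|W_n\|_{L^2}^2$ and $\sum_{i=1}^{2^n} A_n(i,\sigma(i)) = \int_0^1 W_n(x,T_\sigma(x))\,\diff x$ already noted above (the first because $W_n^2$ takes the value $2^{2n}A_n(i,j)^2$ on a box of area $2^{-2n}$, so $\int W_n^2 = \sum_{i,j} A_n(i,j)^2$; the second because $T_\sigma$ is the affine measure-preserving bijection carrying each $I^n_k$ onto $I^n_{\sigma(k)}$, so that $W_n(x,T_\sigma(x))$ equals $2^n A_n(i,\sigma(i))$ on $x\in I^n_i$). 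Taking the supremum over $\sigma\in S_{2^n}$ on the right-hand side then closes the chain of (in)equalities.

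I do not anticipate any genuine obstacle: the mathematical content is entirely contained in~\eqref{eqn:MarcusRee}, and the rest is bookkeeping. The only points requiring a little care are the cosmetic index mismatch in the statement (each maximal trace is a sum of $2^n$, not $n$, terms) and tracking the powers of $2^{-n}$ so that the Frobenius norm and maximal trace of $A_n$ coincide exactly with the $L^2$ norm and the integral trace of $W_n$, with no stray constants.
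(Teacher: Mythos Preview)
Your proposal is correct and matches the paper's approach exactly: the paper states the lemma as ``an immediate consequence of the Marcus--Ree inequality'' after recording precisely the two dictionary identities you use. Your extra care in verifying that $A_n$ is bistochastic and in flagging the cosmetic $n$-versus-$2^n$ index mismatch only makes the argument cleaner.
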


\begin{proof}[Proof of Theorem~\ref{thm:continum}]
Let $W\in \Wcal_{\mathrm{bistoch}}$ and let $\pi\in \Pi$. For $n\in \N$, let $W_n=\mathbb{E}(W\vert \Fcal_n)$. Let $A_n$ be the bistochastic matrix corresponding to $W_n$ and let $B_n$ be the $2^n\times 2^n$ bistochastic matrix such that 
\[B_n(i, j) = \pi(I_{i}^n\times I_j^{n})\;.\]
 Observe that 
\begin{align*}
    \int \int W_n(x, y)\pi(\diff x\;\diff y) = \langle A_n, B_n\rangle\;.
\end{align*}
Using Marcus--Ree inequality and Lemma~\ref{lemma:FiniteDimensional} we obtain
\begin{equation}
\label{eqn:Aux1}
    \int \int W_n(x, y)V(y, x)\diff y\diff x \leq \sup_{\sigma\in S_{2^n}}\int W_n(x, T_{\sigma}(x))\diff x\;.
\end{equation}
Now observe that 
\begin{align*}
     \int \int W(x, y)\pi(\diff y\diff x) &\leq   \int \int W_n(x, y)\pi(\diff y\diff x )+ \norm{\infty}{W_n-W}\\
     &\leq \sup_{\sigma\in S_{2^n}}\int W_n(x, T_{\sigma}(x))\diff x +\norm{\infty}{W_n-W}\;.
\end{align*}
Further notice that for any measure preserving map $T$ we have
\[\int W_n(x, T(x))\diff x \leq \int W(x, T(x))\diff x + \norm{\infty}{W-W_n}\leq \sup_{T\in \Tcal} W(x, T(x))\diff x+\norm{\infty}{W-W_n} \;.\]
Combining all this, we obtain 
\[   \int \int W(x, y)V(y, x)\diff y\diff x \leq \sup_{T\in \Tcal} W(x, T(x))\diff x+ 2\norm{\infty}{W_n-W}\;.\]
The proof finishes by letting $n\to \infty$ and noting that $\norm{\infty}{W_n-W}\to 0$ as $n\to \infty$ by the Martingale convergence theorem and taking supremum over $\pi\in \Pi$. 

\end{proof}

\subsection{An economic interpretation of Marcus--Ree inequality}
\label{sec:EconomicInterpretation}
Consider a scenario in which $n$ resources/agents are to be assigned to $n$ jobs. Suppose that the value or the payoff of assigning the resource/agent $i$ to job $j$ is given by $A(i, j)$. The problem of finding an optimal assignment or pairing of resources/agents and jobs that maximizes the total value is a typical problem studied in combinatorial optimization. Clearly, the maximal trace of the matrix $A$, $\mathrm{maxtrace}(A)=\max_{\sigma\in S_n}\sum_{i=1}^{n}A(i, \sigma(i))$ gives the maximal possible value or the value of optimal assignment. We now present an economic interpretation of the Marcus--Ree inequality. For this, it is better to work with the more general version of Marcus--Ree inequality, given in~\eqref{eqn:Marcus-Ree-General}. Suppose that each resource can be distributed among the jobs arbitrarily. We further assume that the distribution of the resources is costless. Then, instead of assigning the resource $i$ to the job $j$, we can assign $B(i, j)$ fraction of resource $i$ to job $j$ for each $j$. This yields a bistochastic matrix $B$ that we will refer to as the distribution plan. With this interpretation, $\langle A, B\rangle$ is the value of the assignment plan $B$. It may seem that using a distribution plan gives greater flexibility in resource assignment. However, the Marcus--Ree inequality~\eqref{eqn:Marcus-Ree-General} says that the flexibility to divide resources arbitrarily can not beat the optimal assignment where each resource is fully devoted to a unique job.

\bibliographystyle{alpha} 
\bibliography{ref}

@article{marcus1959diagonals,
  title={Diagonals of doubly stochastic matrices},
  author={Marcus, Marvin and Ree, Rimhak},
  journal={The Quarterly Journal of Mathematics},
  volume={10},
  number={1},
  pages={296--302},
  year={1959},
  publisher={Oxford University Press}
}

@article{bouthat2024question,
  title={On a question of {E}rd{\H{o}}s on doubly stochastic matrices},
  author={Bouthat, Ludovick and Mashreghi, Javad and Morneau-Gu{\'e}rin, Fr{\'e}d{\'e}ric},
  journal={Linear and Multilinear Algebra},
  pages={1--22},
  year={2024},
  publisher={Taylor \& Francis}
}

@article{christensen2019comparative,
  title={Comparative statics and heterogeneity},
  author={Christensen, Finn},
  journal={Economic Theory},
  volume={67},
  number={3},
  pages={665--702},
  year={2019},
  publisher={Springer}
}

@article{birkhoff1946three,
  title={Tres observaciones sobre el algebra lineal},
  author={Birkhoff, Garrett},
  journal={Univ. Nac. Tacuman, Rev. Ser. A},
  volume={5},
  pages={147--151},
  year={1946}
}

@article{tripathi2025some,
  title={Some observations on {E}rd{\H{o}}s matrices},
  author={Tripathi, Raghavendra},
  journal={Linear Algebra and its Applications},
  volume={708},
  pages={236--251},
  year={2025},
  publisher={Elsevier}
}

@article{butkovivc2003max,
  title={Max-algebra: the linear algebra of combinatorics?},
  author={Butkovi{\v{c}}, Peter},
  journal={Linear Algebra and its Applications},
  volume={367},
  pages={313--335},
  year={2003},
  publisher={Elsevier}
}

@article{bapat1995permanents,
  title={Permanents, max algebra and optimal assignment},
  author={Bapat, RB},
  journal={Linear algebra and its Applications},
  volume={226},
  pages={73--86},
  year={1995},
  publisher={Elsevier}
}

@article{burkard2003max,
  title={Max algebra and the linear assignment problem},
  author={Burkard, Rainer E and Butkovi{\v{c}}, Peter},
  journal={Mathematical Programming},
  volume={98},
  number={1},
  pages={415--429},
  year={2003},
  publisher={Springer}
}

@article{gurvits2007van,
  title={Van der {W}aerden/{S}chrijver-{V}aliant like conjectures and stable (aka hyperbolic) homogeneous polynomials: one theorem for all},
  author={Gurvits, Leonid},
  journal={arXiv preprint arXiv:0711.3496},
  year={2007}
}

@article{tikhomirov2020singularity,
  title={Singularity of random {B}ernoulli matrices},
  author={Tikhomirov, Konstantin},
  journal={Annals of Mathematics},
  volume={191},
  number={2},
  pages={593--634},
  year={2020},
  publisher={Department of Mathematics, Princeton University Princeton, New Jersey, USA}
}

@article{tao2007singularity,
  title={On the singularity probability of random {B}ernoulli matrices},
  author={Tao, Terence and Vu, Van},
  journal={Journal of the American Mathematical Society},
  volume={20},
  number={3},
  pages={603--628},
  year={2007}
}

@article{jain2020sharp,
  title={Sharp invertibility of random {B}ernoulli matrices},
  author={Jain, Vishesh and Sah, Ashwin and Sawhney, Mehtaab},
  journal={arXiv preprint arXiv:2010.06553},
  year={2020}
}

@article{pena2001class,
  title={A class of {P}-matrices with applications to the localization of the eigenvalues of a real matrix},
  author={Pe{\~n}a, Juan Manuel},
  journal={SIAM Journal on Matrix Analysis and Applications},
  volume={22},
  number={4},
  pages={1027--1037},
  year={2001},
  publisher={SIAM}
}

@article{losert1982counter,
  title={Counter-examples to some conjectures about doubly stochastic measures},
  author={Losert, Viktor},
  journal={Pacific Journal of Mathematics},
  volume={99},
  number={2},
  pages={387--397},
  year={1982},
  publisher={Mathematical Sciences Publishers}
}

@misc{oeisA381842,
    Author = {{OEIS Foundation Inc.}},
    Note = {Published electronically at \url{https://oeis.org/A381842}},
    Title = {The {O}n-{L}ine {E}ncyclopedia of {I}nteger {S}equences},
    Year = 2025}

@misc{oeisA362763,
    Author = {{OEIS Foundation Inc.}},
    Note = {Published electronically at \url{https://oeis.org/A362763}},
    Title = {The {O}n-{L}ine {E}ncyclopedia of {I}nteger {S}equences},
    Year = 2023}
\newpage
\appendix
\section{\texorpdfstring{$4 \times 4$}{4 x 4} Erd\H{o}s matrices}
\label{append:Erdos4x4}

\begin{center}
\begin{longtable}{c c c c}
$\begin{pmatrix} 1 & 0 & 0 & 0 \\ 0 & 1 & 0 & 0 \\ 0 & 0 & 1 & 0 \\ 0 & 0 & 0 & 1 \end{pmatrix}, $ &
$\dfrac{1}{2} \begin{pmatrix} 1 & 0 & 0 & 1 \\ 0 & 2 & 0 & 0 \\ 0 & 0 & 2 & 0 \\ 1 & 0 & 0 & 1 \end{pmatrix}, $ &
$\dfrac{1}{2} \begin{pmatrix} 1 & 0 & 0 & 1 \\ 0 & 2 & 0 & 0 \\ 1 & 0 & 1 & 0 \\ 0 & 0 & 1 & 1 \end{pmatrix}, $ &
$\dfrac{1}{5} \begin{pmatrix} 1 & 0 & 2 & 2 \\ 0 & 5 & 0 & 0 \\ 2 & 0 & 3 & 0 \\ 2 & 0 & 0 & 3 \end{pmatrix}, $\\[2.7em]
$\dfrac{1}{4} \begin{pmatrix} 1 & 0 & 1 & 2 \\ 0 & 4 & 0 & 0 \\ 2 & 0 & 2 & 0 \\ 1 & 0 & 1 & 2 \end{pmatrix},$ &
$\dfrac{1}{2} \begin{pmatrix} 1 & 0 & 0 & 1 \\ 0 & 1 & 1 & 0 \\ 1 & 0 & 1 & 0 \\ 0 & 1 & 0 & 1 \end{pmatrix}, $ &
$ \dfrac{1}{2} \begin{pmatrix} 1 & 0 & 0 & 1 \\ 0 & 1 & 1 & 0 \\ 0 & 1 & 1 & 0 \\ 1 & 0 & 0 & 1 \end{pmatrix}, $ &
$ \dfrac{1}{3} \begin{pmatrix} 1 & 0 & 1 & 1 \\ 0 & 3 & 0 & 0 \\ 1 & 0 & 1 & 1 \\ 1 & 0 & 1 & 1 \end{pmatrix}, $\\[2.7em]
$ \dfrac{1}{3} \begin{pmatrix} 1 & 0 & 0 & 2 \\ 1 & 2 & 0 & 0 \\ 1 & 0 & 2 & 0 \\ 0 & 1 & 1 & 1 \end{pmatrix}, $ & $ \dfrac{1}{23} \begin{pmatrix} 4 & 0 & 9 & 10 \\ 10 & 13 & 0 & 0 \\ 9 & 0 & 14 & 0 \\ 0 & 10 & 0 & 13 \end{pmatrix}, $ &
$ \dfrac{1}{8} \begin{pmatrix} 1 & 1 & 3 & 3 \\ 4 & 4 & 0 & 0 \\ 0 & 3 & 5 & 0 \\ 3 & 0 & 0 & 5 \end{pmatrix}, $ & 
$ \dfrac{1}{8} \begin{pmatrix} 1 & 0 & 3 & 4 \\ 3 & 5 & 0 & 0 \\ 3 & 0 & 5 & 0 \\ 1 & 3 & 0 & 4 \end{pmatrix}, $\\[2.7em]

$ \dfrac{1}{11} \begin{pmatrix} 2 & 0 & 3 & 6 \\ 4 & 7 & 0 & 0 \\ 5 & 0 & 6 & 0 \\ 0 & 4 & 2 & 5 \end{pmatrix}, $ &
$ \dfrac{1}{43} \begin{pmatrix} 2 & 7 & 15 & 19 \\ 19 & 24 & 0 & 0 \\ 15 & 0 & 28 & 0 \\ 7 & 12 & 0 & 24 \end{pmatrix}, $ &
$ \dfrac{1}{4} \begin{pmatrix} 1 & 0 & 1 & 2 \\ 2 & 2 & 0 & 0 \\ 0 & 2 & 2 & 0 \\ 1 & 0 & 1 & 2 \end{pmatrix}, $ &
$ \dfrac{1}{4} \begin{pmatrix} 1 & 0 & 1 & 2 \\ 1 & 2 & 1 & 0 \\ 2 & 0 & 2 & 0 \\ 0 & 2 & 0 & 2 \end{pmatrix}, $ \\[2.7em]
$ \dfrac{1}{29} \begin{pmatrix} 3 & 6 & 6 & 14 \\ 13 & 16 & 0 & 0 \\ 13 & 0 & 16 & 0 \\ 0 & 7 & 7 & 15 \end{pmatrix}, $ &
$ \dfrac{1}{29} \begin{pmatrix} 3 & 0 & 13 & 13 \\ 14 & 15 & 0 & 0 \\ 6 & 7 & 16 & 0 \\ 6 & 7 & 0 & 16 \end{pmatrix}, $ &
$ \dfrac{1}{7} \begin{pmatrix} 2 & 0 & 2 & 3 \\ 0 & 4 & 3 & 0 \\ 2 & 3 & 2 & 0 \\ 3 & 0 & 0 & 4 \end{pmatrix}, $ & 
$ \dfrac{1}{14} \begin{pmatrix} 1 & 3 & 3 & 7 \\ 6 & 8 & 0 & 0 \\ 6 & 0 & 8 & 0 \\ 1 & 3 & 3 & 7 \end{pmatrix}, $ \\[2.7em]
$ \dfrac{1}{14} \begin{pmatrix} 1 & 1 & 6 & 6 \\ 7 & 7 & 0 & 0 \\ 3 & 3 & 8 & 0 \\ 3 & 3 & 0 & 8 \end{pmatrix}, $ &
$ \dfrac{1}{22} \begin{pmatrix} 4 & 4 & 5 & 9 \\11 & 11 & 0 & 0 \\ 7 & 7 & 8 & 0 \\ 0 & 0 & 9 & 13 \end{pmatrix}, $ &
$ \dfrac{1}{22} \begin{pmatrix} 4 & 0 & 7 & 11 \\ 9 & 13 & 0 & 0 \\ 5 & 9 & 8 & 0 \\ 4 & 0 & 7 & 11 \end{pmatrix}, $ & 
$ \dfrac{1}{10} \begin{pmatrix} 2 & 0 & 3 & 5 \\ 5 & 5 & 0 & 0 \\ 3 & 3 & 4 & 0 \\ 0 & 2 & 3 & 5 \end{pmatrix}, $\\[2.7em]
$ \dfrac{1}{19} \begin{pmatrix} 2 & 3 & 5 & 9 \\ 9 & 10 & 0 & 0 \\ 5 & 6 & 8 & 0 \\ 3 & 0 & 6 & 10 \end{pmatrix}, $ &
$ \dfrac{1}{18} \begin{pmatrix} 2 & 2 & 5 & 9 \\ 9 & 9 & 0 & 0 \\ 5 & 5 & 8 & 0 \\ 2 & 2 & 5 & 9 \end{pmatrix}, $ & 
$ \dfrac{1}{17} \begin{pmatrix} 5 & 0 & 5 & 7 \\ 6 & 5 & 6 & 0 \\ 6 & 5 & 6 & 0 \\ 0 & 7 & 0 & 10 \end{pmatrix}, $ & 
$ \dfrac{1}{4} \begin{pmatrix} 1 & 1 & 1 & 1 \\ 0 & 2 & 0 & 2 \\ 2 & 0 & 2 & 0 \\ 1 & 1 & 1 & 1 \end{pmatrix}, $\\[2.7em]
$ \dfrac{1}{4} \begin{pmatrix} 1 & 0 & 1 & 2 \\ 1 & 2 & 1 & 0 \\ 1 & 2 & 1 & 0 \\ 1 & 0 & 1 & 2 \end{pmatrix}, $ & 
$ \dfrac{1}{8} \begin{pmatrix} 1 & 2 & 2 & 3 \\ 2 & 3 & 3 & 0 \\ 2 & 3 & 3 & 0 \\ 3 & 0 & 0 & 5 \end{pmatrix}, $ & $ \dfrac{1}{14} \begin{pmatrix} 3 & 3 & 4 & 4 \\ 7 & 7 & 0 & 0 \\ 0 & 4 & 5 & 5 \\ 4 & 0 & 5 & 5 \end{pmatrix}, $ & 
$ \dfrac{1}{14} \begin{pmatrix} 3 & 0 & 4 & 7 \\ 4 & 5 & 5 & 0 \\ 4 & 5 & 5 & 0 \\ 3 & 4 & 0 & 7 \end{pmatrix}, $ \\[2.7em]
$ \dfrac{1}{13} \begin{pmatrix} 2 & 3 & 4 & 4 \\ 6 & 7 & 0 & 0 \\ 2 & 3 & 4 & 4 \\ 3 & 0 & 5 & 5 \end{pmatrix}, $ &
$ \dfrac{1}{13}
\begin{pmatrix}
2 & 2 & 3 & 6 \\
4 & 4 & 5 & 0 \\
4 & 4 & 5 & 0 \\
3 & 3 & 0 & 7
\end{pmatrix}, $ &
$ \dfrac{1}{3}
\begin{pmatrix}
1 & 0 & 1 & 1 \\
0 & 1 & 1 & 1 \\
1 & 1 & 1 & 0 \\
1 & 1 & 0 & 1
\end{pmatrix}, $ &
$ \dfrac{1}{6}
\begin{pmatrix}
1 & 1 & 2 & 2 \\
3 & 3 & 0 & 0 \\
1 & 1 & 2 & 2 \\
1 & 1 & 2 & 2
\end{pmatrix}, $ \\[2.7em]
$ \dfrac{1}{6}
\begin{pmatrix}
1 & 1 & 1 & 3 \\
2 & 2 & 2 & 0 \\
2 & 2 & 2 & 0 \\
1 & 1 & 1 & 3
\end{pmatrix}, $ &
$ \dfrac{1}{11}
\begin{pmatrix}
2 & 3 & 3 & 3 \\
3 & 4 & 0 & 4 \\
3 & 4 & 4 & 0 \\
3 & 0 & 4 & 4
\end{pmatrix}, $ &
$ \dfrac{1}{10}
\begin{pmatrix}
2 & 2 & 3 & 3 \\
2 & 2 & 3 & 3 \\
3 & 3 & 4 & 0 \\
3 & 3 & 0 & 4
\end{pmatrix}, $ &
$ \dfrac{1}{9}
\begin{pmatrix}
2 & 2 & 2 & 3 \\
2 & 2 & 2 & 3 \\
3 & 3 & 3 & 0 \\
2 & 2 & 2 & 3
\end{pmatrix}, $\\[2.7em]
$ \dfrac{1}{4}
\begin{pmatrix}
1 & 1 & 1 & 1 \\
1 & 1 & 1 & 1 \\
1 & 1 & 1 & 1 \\
1 & 1 & 1 & 1
\end{pmatrix}$. &&&
\end{longtable}
\end{center}
































    

\end{document}